\newtheorem{theorem}{Theorem}[section]
\newtheorem{proposition}[theorem]{Proposition}
\newtheorem{lemma}[theorem]{Lemma}
\def\cQ{\mathcal Q}
\def\cX{\mathcal X}
\def\cY{\mathcal Y}
\def\K{\mathbb{K}}
\def\ord{\mbox{\rm ord}}
\def\deg{\mbox{\rm deg}}
\def\div{\mbox{\rm div}}
\def\Ker{\mbox{\rm Ker}}
\def\Div{\mbox{\rm div}}
\def\dim{\mbox{\rm dim}}
\def\supp{\mbox{\rm Supp}}
\def\gg{\mathfrak{g}}
\newcommand{\aut}{\mbox{\rm Aut}}
\def\supp{{\rm Supp}}
\newcommand{\ha}{{\textstyle\frac{1}{2}}}
\newcommand{\qa}{\textstyle\frac{1}{4}}
\title{Ordinary algebraic curves with many automorphisms in positive characteristic}
\date{}
\author{G\'abor Korchm\'aros and Maria Montanucci}
\begin{document}
\maketitle

\vspace{0.5cm}\noindent {\em Keywords}:
Algebraic curves, algebraic function fields, positive characteristic, automorphism groups.
\vspace{0.2cm}\noindent

\vspace{0.5cm}\noindent {\em Subject classifications}:
\vspace{0.2cm}\noindent  14H37, 14H05.


\begin{abstract} Let $\cX$ be an ordinary (projective, geometrically irreducible, nonsingular) algebraic curve of genus $\gg(\cX) \ge 2$ defined over an algebraically closed field $\K$ of odd characteristic $p$. Let $\aut(\cX)$ be the group of all automorphisms of $\cX$ which fix $\K$ element-wise.  For any solvable subgroup $G$ of $\aut(\cX)$ we prove that $|G|\leq 34 (\gg(\cX)+1)^{3/2}$. There are known curves attaining this bound up to the constant $34$. For $p$ odd, our result improves the classical Nakajima bound $|G|\leq 84(\gg(\cX)-1)\gg(\cX)$, see \cite{N}, and, for solvable groups $G$, the Gunby-Smith-Yuan bound $|G|\leq 6(\gg(\cX)^2+12\sqrt{21}\gg(\cX)^{3/2})$ where $\gg(\cX)>cp^2$ for some positive constant $c$; see \cite{GSY} .
\end{abstract}
    \section{Introduction}

 In this paper, $\cX$ stands for a (projective, geometrically irreducible, nonsingular) algebraic curve of genus $\gg(\cX) \ge 2$ defined over an algebraically closed field $\K$ of odd characteristic $p$. Let $\aut(\cX)$ be the group of all automorphisms of $\cX$ which fix $\K$ element-wise. The assumption $\gg(\cX)\geq 2$ ensures that $\aut(\cX)$ is finite. However the classical Hurwitz bound
 $|\aut(\cX)| \leq 84(\gg(\cX)-1)$ for complex curves fails in positive characteristic, and there exist four families of curves satisfying $|\aut(\cX)|\geq 8\gg^3(\cX)$; see \cite{stichtenoth1973II}, Henn \cite{henn1978}, and also \cite[Section 11.12]{HKT}. Each of them has has $p$-rank $\gamma(\cX)$ (equivalently, its Hasse-Witt invariant) equal to zero; see for instance \cite{GKLINZ}. On the other hand, if $\cX$ is ordinary, i.e. $\gg(\cX)=\gamma(\cX)$, Guralnik and Zieve announced in 2004, as reported in \cite{GSY,KR}, that for odd $p$  there exists a sharper bound, namely $|\aut(\cX)|\leq c_p\gg(\cX)^{8/5}$ with some constant $c_p$ depending on $p$. It should be noticed that no proof of this sharper bound is available in the literature. In this paper, we concern with solvable automorphism groups $G$ of an ordinary curve $\cX$, and for odd $p$ we prove the even sharper bound:
 \begin{theorem}
 \label{princ} Let $\cX$ be an algebraic curve of genus $\gg(\cX) \ge 2$ defined over an algebraically closed field $\K$ of odd characteristic $p$.
 If $\cX$ is ordinary and $G$ is a solvable subgroup of $\aut(\cX)$ then
 \begin{equation}
 \label{bound}
 |G|\leq 34(\gg(\cX)+1)^{3/2}.
 \end{equation}
 \end{theorem}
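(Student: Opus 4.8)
The plan is to bound $|G|$ by establishing a lower bound of the shape $\gg(\cX)\ge c\,|G|^{2/3}$, feeding the ramification restrictions forced by ordinariness into the Riemann--Hurwitz and Deuring--Shafarevich formulas. First I would dispose of the easy cases for the cover $\cX\to\cX/G$. If $\gg(\cX/G)\ge 1$, Riemann--Hurwitz already gives $|G|\le 4(\gg(\cX)-1)$. If $p\nmid|G|$, the cover is tame, the genus formula is the classical one, and (as $p$ is odd) the Hurwitz-type analysis gives $|G|\le 84(\gg(\cX)-1)$. If $\gg(\cX/G)=0$ with at least three branch points, then combining the Riemann--Hurwitz defect $-2+\sum_i d_i/e_i$ --- bounded below by a fixed positive constant, using $d_i/e_i\ge 1-1/e_i$ at tame points and that jump-$1$ wild points contribute $d_i/e_i$ close to an integer $\ge 1$ --- with the classification of solvable subgroups of $\PGL(2,\K)$ (cyclic, dihedral, $A_4$, $S_4$, or of type $\F_q\rtimes\F_q^{\ast}$) yields $|G|=O(\gg(\cX))$. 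In each of these cases $|G|\le 34(\gg(\cX)+1)^{3/2}$ for $\gg(\cX)\ge 2$. So from now on $\cX/G\cong\P^1$, $p\mid|G|$, and $\cX\to\cX/G$ is ramified at exactly two points (one or no branch point being excluded by Deuring--Shafarevich when $\gg(\cX)\ge 2$).

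Next I would exploit the structural consequences of $\gg(\cX)=\gamma(\cX)$: every quotient of $\cX$ is ordinary; at each point $P$ the second ramification group is trivial, so the wild inertia $G_{P,1}$ is elementary abelian and, viewed inside $(\K,+)$, the cyclic prime-to-$p$ complement $C_P$ of $G_{P,0}=G_{P,1}\rtimes C_P$ acts on $G_{P,1}$ faithfully by a single scalar, whence $|C_P|\mid|G_{P,1}|-1$; and Deuring--Shafarevich applied to a normal $p$-subgroup $N$ gives $\gg(\cX)-1\ge|N|\bigl(\sum_i(1-1/e_i)-1\bigr)$, hence $|N|\le 3\,\gg(\cX)$. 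After the reductions above one arrives at the principal case to be analyzed: $V:=O_p(G)\ne 1$ is the (elementary abelian) Sylow $p$-subgroup of $G$, $\cX/V\cong\P^1$ with $\cX\to\cX/V$ ramified exactly over the two points $Q_1,Q_2$ lying above the two branch points of $\cX\to\cX/G$, with inertia orders $e_1,e_2$ (powers of $p$) and $\gg(\cX)-1=|V|(1-1/e_1-1/e_2)$, while $\bar G:=G/V$ has order prime to $p$ and sits inside $\PGL(2,\K)$ as a solvable group.

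The decisive point is to bound $[G:V]=|\bar G|$ by $O(\sqrt{\gg(\cX)})$. Since $\cX\to\cX/V$ is $\bar G$-equivariant, $\bar G$ permutes $\{Q_1,Q_2\}$, so an index-$\le 2$ subgroup $\bar G_0\le\bar G$ fixes both $Q_i$; fixing $\ge 2$ points of $\P^1=\cX/V$, $\bar G_0$ is cyclic and acts on $\cX/V=\P^1$, with $Q_1,Q_2$ placed at $0,\infty$, by $x\mapsto\gl x$ for a root of unity $\gl\in\K^{\ast}$. As $\gcd(|\bar G_0|,|V|)=1$, vanishing of the relevant $H^1$ lets $\bar G_0$ fix a point $P_i\in\cX$ over $Q_i$; a Schur--Zassenhaus complement then embeds $\bar G_0$ into $C_{P_i}$, yielding the preliminary bound $|\bar G_0|\mid e_i-1$. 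The refinement that actually does the work uses the jump-$1$ description $\K(\cX)=\K(x)(y_1,\dots,y_s)$ with $|V|=p^{s}$ and $y_k^{\,p}-y_k=f_k$, where the $f_k$ span an $s$-dimensional $\F_p$-subspace $W$ of the classes of functions $\alpha x+\beta x^{-1}$ modulo $\wp(\K(x))$, and $x\mapsto\gl x$ acts on $W$ as multiplication by $\gl$. Hence $W$ is an $\F_p[\gl]$-module inside $\K$, forcing $\F_p(\gl)=\F_{p^{d}}$ with $d\mid s$; and matching the pole parts at $0$ and $\infty$ imposes a $\gl$-semilinear relation whose consistency forces $\mathrm{ord}(\gl)$ to divide a greatest common divisor of the form $\gcd(p^{j}+1,\,p^{s}-1)$ with $0\le j<s$ (or $\mathrm{ord}(\gl)\le 2$); standard gcd identities then give $\mathrm{ord}(\gl)\le p^{\lfloor s/2\rfloor}+1$, i.e. $|\bar G_0|=O(\sqrt{|V|})$. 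Therefore $|\bar G|\le 2\bigl(p^{\lfloor s/2\rfloor}+1\bigr)$, and using $\gg(\cX)+1\ge|V|(1-2/p)\ge\tfrac13|V|$ from the genus formula,
\begin{equation*}
|G|=|V|\cdot|\bar G|\ \le\ 2\,p^{s}\bigl(p^{\lfloor s/2\rfloor}+1\bigr)\ =\ O\bigl(|V|^{3/2}\bigr)\ =\ O\bigl(\gg(\cX)^{3/2}\bigr).
\end{equation*}

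I expect this last step to be the main obstacle: showing that the prime-to-$p$ part $[G:V]$ cannot be large once the $p$-part $V$ is large. It is precisely here that ordinariness is used in an essential way --- through the triviality of the second ramification groups (jump $1$) and the resulting arithmetic of $\gcd(p^{\pm j}\mp 1)$ --- since dropping ordinariness one is left only with $|\bar G_0|\mid e_i-1$, which is far too weak and would give merely an $O(\gg^2)$ bound. To finish, the explicit constant $34$ is obtained by carrying the small constants through the cases above, treating the finitely many small primes $p$ and small genera $\gg(\cX)$ directly; the extremal family --- elementary abelian covers of $\P^1$ branched at two points on which a torus of $\PGL(2,p)$ acts, with $\gg(\cX)\sim p^2$ and $|G|\sim p^3$ --- shows that both the exponent $3/2$ and (up to a bounded factor) the constant are best possible.
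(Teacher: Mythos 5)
Your overall architecture --- reduce to a rational quotient $\cX/G$ with two wild branch points, pass to a normal elementary abelian $p$-subgroup $V$ with $\cX/V$ rational and branched at exactly two points, then show the prime-to-$p$ complement has order $O(\sqrt{|V|})$ --- is close in spirit to the paper's Case~I, and you have correctly identified the decisive difficulty (bounding $[G:V]$). But there are genuine gaps. The first is structural: you pass from ``$p$ divides $|G|$'' to ``$V=O_p(G)\neq 1$ is the Sylow $p$-subgroup of $G$'' with no argument, and neither implication holds. A solvable group of order divisible by $p$ can have $O_p(G)=1$ (all its minimal normal subgroups can be $\ell$-groups with $\ell\neq p$), and for $p=3$ the quotient $G/O_p(G)$, viewed inside $\PGL(2,\K)$, can be $\mathrm{Alt}_4$ or $\mathrm{Sym}_4$, whose order is divisible by $p$. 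The case $O_p(G)=1$ is not a boundary case to be waved away: it is the paper's entire Case~II (Proposition 3.2), handled by an induction that alternately quotients by the largest normal prime-to-$p$ subgroup and the largest normal $p$-subgroup, tracking $\gg-\gamma$ through the tower via the combined Hurwitz and Deuring--Shafarevich formulas to contradict minimality; your proposal contains no substitute for this. Relatedly, your assertion that $\cX\to\cX/V$ is branched over exactly two points, both fixed by $G/V$ (equivalently, that $V$ has exactly two short orbits which are also the only short orbits of $G$), is precisely the content of the paper's Lemma 2.1, which needs a size hypothesis on the group and a three-way case analysis on the genus of the quotient; it is not free.

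The second gap is in the decisive step itself. Your preliminary bound $|\bar G_0|\mid e_i-1$ is sound (it is the Serre/Nakajima constraint on stabilizers of ordinary curves), but the refinement to $\mathrm{ord}(\gl)\mid\gcd(p^{j}+1,p^{s}-1)$, hence $|\bar G_0|=O(p^{s/2})$, rests on an unspecified ``$\gl$-semilinear consistency relation'' between the pole parts at $0$ and $\infty$. As stated, the $\mathbb{F}_p[\gl]$-module structure on the space $W$ of Artin--Schreier classes only forces $\mathbb{F}_p(\gl)=\mathbb{F}_{p^{d}}$ with $d$ dividing the dimensions of the two projections of $W$, and both projections can have dimension $s$; that returns only $\mathrm{ord}(\gl)\le p^{s}-1$, i.e.\ nothing beyond the preliminary bound. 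You would need to exhibit and prove the extra relation, and that is exactly where the work lies. The paper obtains the $\sqrt{|V|}$ saving by a different, fully worked-out mechanism: in the main subcase it proves $Q=Q_P\times Q_R$ for points $P,R$ in the two short orbits, so $|Q_P|=\sqrt{|Q|}$ and $|U|\le |Q_P|-1$ suffices; in the subcase where both short orbits are single points it runs a Riemann--Roch argument with the additive polynomial $\prod_{\sigma\in Q}(U-\theta_\sigma)$ to force $|U|\le 2$. Until you supply a proof of your gcd claim and an argument covering $O_p(G)=1$, the proposal does not yet constitute a proof.
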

For odd $p$, our result provides an improvement on the classical Nakajima bound $|G|\leq 84(\gg(\cX)-1)\gg(\cX)$, see \cite{N}, and, for solvable groups, on the recent Gunby-Smith-Yuan bound $|G|\leq 6(\gg(\cX)^2+12\sqrt{21}\gg(\cX)^{3/2})$ proven in \cite{GSY} under the hypothesis that $\gg(\cX)>cp^2$ for some positive constant $c$.

The following example is due to Stichtenoth and it shows that (\ref{bound}) is the best possible bound apart from the constant $c$.
Let $\mathbb{F}_q$ be a finite field of order $q=p^h$ and let $\mathbb{K}=\overline{\mathbb{F}_q}$ denote its algebraic closure. For a positive integer $m$ prime to $p$, let $\cY$ be the irreducible curve with affine equation
\begin{equation}
\label{oct6eq1}
y^q+y=x^m+\frac{1}{x^m},
\end{equation}
and $F=\mathbb{K}(\cY))$ its function field. Let $t=x^{m(q-1)}$. The extension $F|\mathbb{K}(t)$ is a non-Galois extension as the Galois closure
of $F$ with respect to $H$ is the function field $\mathbb{K}(x,y,z)$ where $x,y,z$ are linked by (\ref{oct6eq1}) and $z^q+z=x^m$. Furthermore, $\gg(\cY)=(q-1)(qm-1)$, $\gamma(\cY)=(q-1)^2$ and  $\aut(\cY)$ contains a subgroup $Q\rtimes U$ of index $2$  where $Q$ is an elementary abelian normal subgroup of order $q^2$ and the complement $U$ is a cyclic group of order $m(q-1)$. If $m=1$ then $\cY$ is an ordinary curve, and in this case
$2\gg(\cY)^{3/2}=2(q-1)^3<2q^2(q-1)$ which shows indeed that (\ref{bound}) is sharp up to the constant $c$.


\section{Background and Preliminary Results}\label{sec2}
For a subgroup $G$ of $\aut(\cX)$, let $\bar \cX$ denote a non-singular model of $\K(\cX)^G$, that is,
a (projective non-singular geometrically irreducible) algebraic curve with function field $\K(\cX)^G$, where $\K(\cX)^G$ consists of all elements of $\K(\cX)$ fixed by every element in $G$. Usually, $\bar \cX$ is called the
quotient curve of $\cX$ by $G$ and denoted by $\cX/G$. The field extension $\K(\cX)|\K(\cX)^G$ is Galois of degree $|G|$.

Since our approach is mostly group theoretical, we prefer to use notation and terminology from Group theory rather than from Function field theory.

Let $\Phi$ be the cover of $\cX|\bar{\cX}$ where $\bar{\cX}=\cX/G$. A point $P\in\cX$ is a ramification point of $G$ if the stabilizer $G_P$ of $P$ in $G$ is nontrivial; the ramification index $e_P$ is $|G_P|$; a point $\bar{Q}\in\bar{\cX}$ is a branch point of $G$ if there is a ramification point $P\in \cX$ such that $\Phi(P)=\bar{Q}$; the ramification (branch) locus of $G$ is the set of all ramification (branch) points. The $G$-orbit of $P\in \cX$ is the subset
$o=\{R\mid R=g(P),\, g\in G\}$ of $\cX$, and it is {\em long} if $|o|=|G|$, otherwise $o$ is {\em short}. For a point $\bar{Q}$, the $G$-orbit $o$ lying over $\bar{Q}$ consists of all points $P\in\cX$ such that $\Phi(P)=\bar{Q}$. If $P\in o$ then $|o|=|G|/|G_P|$ and hence $\bar{\cQ}$ is a branch point if and only if $o$ is a short $G$-orbit. It may be that $G$ has no short orbits. This is the case if and only if every non-trivial element in $G$ is fixed--point-free on $\cX$, that is, the cover $\Phi$ is unramified. On the other hand, $G$ has a finite number of short orbits. For a non-negative integer $i$, the $i$-th ramification group of $\cX$
at $P$ is denoted by $G_P^{(i)}$ (or $G_i(P)$ as in \cite[Chapter
IV]{serre1979})  and defined to be
$$G_P^{(i)}=\{g\mid \ord_P(g(t)-t)\geq i+1, g\in
G_P\}, $$ where $t$ is a uniformizing element (local parameter) at
$P$. Here $G_P^{(0)}=G_P$.

Let $\bar{\gg}$ be the genus of the quotient curve $\bar{\cX}=\cX/G$. The Hurwitz
genus formula  gives the following equation
    \begin{equation}
    \label{eq1}
2\gg-2=|G|(2\bar{\gg}-2)+\sum_{P\in \cX} d_P.
    \end{equation}
    where the different $d_P$ at $P$ is given by
\begin{equation}
\label{eq1bis}
d_P= \sum_{i\geq 0}(|G_P^{(i)}|-1).
\end{equation}

Let $\gamma$ be the $p$-rank of $\cX$, and let $\bar{\gamma}$ be the $p$-rank of the quotient curve $\bar{\cX}=\cX/S$.
The Deuring-Shafarevich formula, see \cite{sullivan1975} or \cite[Theorem 11,62]{HKT}, states that
\begin{equation}
    \label{eq2deuring}
\gamma-1={|G|}(\bar{\gamma}-1)+\sum_{i=1}^k (|G|-\ell_i)
    \end{equation}
where $\ell_1,\ldots,\ell_k$ are the sizes of the short orbits of $G$.

A subgroup of $\aut(\cX)$ is  a prime to $p$ group (or a $p'$-subgroup) if its order is prime to $p$. A subgroup $G$ of $\aut(\cX)$ is {\em{tame}} if the $1$-point stabilizer of any point in $G$ is $p'$-group. Otherwise, $G$ is {\em{non-tame}} (or {\em{wild}}). Obviously, every $p'$-subgroup of $\aut(\cX)$ is tame, but the converse is not always true. By a theorem of Stichtenoth, see
\cite[Theorem 11.56]{HKT}, if $|G|>84(\gg(\cX)-1)$ then $G$ is non-tame.  An orbit $o$ of $G$ is {\em{tame}} if $G_P$ is a $p'$-group for $P\in o$. The stabilizer $G_P$ of a point $P\in \cX$ in $G$ is a semidirect product $G_P=Q_P\rtimes U$ where the normal subgroup $Q_P$ is a $p$-group while the complement $U$ is a cyclic prime to $p$ group; see \cite[Theorem 11.49]{HKT}. By a theorem of Serre, if $\cX$ is an ordinary curve then $Q_P$ is elementary abelian and no nontrivial element of $U$ commutes with a nontrivial element of $Q_P$; see \cite[Lemma 11.75]{HKT}. In particular, $|U|$ divides $|Q_P|-1$, see \cite[Proposition 1]{N}, and $G_P$ is not abelian when either $Q_P$ or $U$ is nontrivial.

The following two lemmas of independent interest play a role in our proof of Theorem \ref{princ}.
\begin{lemma}
\label{terrible} Let $\cX$ be an ordinary algebraic curve of genus $\gg(\cX) \ge 2$ defined over an algebraically closed field $\K$ of odd characteristic $p$. Let $H$ be a solvable automorphism group of $\aut(\cX)$ containing a normal $p$-subgroup $Q$ such that $|Q|$ and $[H:Q]$ are coprime. Suppose that a complement $U$ of $Q$ in $H$ is abelian and that
\begin{equation}
\label{eq22bisdic2015}
|H|> \left\{
\begin{array}{lll}
{\mbox{$18(\gg-1)$ for $|U|=3$}}, \\
{\mbox{$12(\gg-1)$ otherwise}}.
\end{array}
\right.
\end{equation}
Then $U$ is cyclic, and the quotient curve $\bar{\cX}=\cX/Q$ is rational. Furthermore, $Q$ has exactly two (non-tame) short orbits, say $\Omega_1, \Omega_2$. They are also the only short orbits of $H$, and $\gg(\cX)=|Q|-(|\Omega_1|+|\Omega_2|)+1.$
\end{lemma}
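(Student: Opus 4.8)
The plan is to confront the Hurwitz genus formula \eqref{eq1} with the Deuring--Shafarevich formula \eqref{eq2deuring}, first for the normal $p$-subgroup $Q$ acting on $\cX$, and then to exploit the (faithful) action of $U\cong H/Q$ on $\bar{\cX}=\cX/Q$. The first step is this. Write $\bar{\gg}$, $\bar{\gamma}$ for the genus and $p$-rank of $\bar{\cX}$, let $\Omega_1,\dots,\Omega_k$ be the short orbits of $Q$ with representatives $P_i$, and put $r=\sum_{i=1}^{k}\bigl(1-1/|Q_{P_i}|\bigr)$, so that $\tfrac23\le 1-1/|Q_{P_i}|<1$ since $|Q_{P_i}|\ge p\ge 3$. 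As $Q$ is a $p$-group, $Q_{P_i}^{(0)}=Q_{P_i}^{(1)}=Q_{P_i}$, hence $d_{P_i}\ge 2(|Q_{P_i}|-1)$, and \eqref{eq1}--\eqref{eq1bis} give $\gg-1\ge|Q|(\bar{\gg}-1+r)$; on the other hand \eqref{eq2deuring} with the ordinariness $\gamma(\cX)=\gg$ gives the equality $\gg-1=|Q|(\bar{\gamma}-1+r)$. Since $\bar{\gamma}\le\bar{\gg}$ always, I get $\bar{\gamma}=\bar{\gg}$, so $\bar{\cX}$ is again ordinary, and equality holds above, whence $Q_{P_i}^{(2)}=1$ for all $i$ and
\[
\gg-1=|Q|\,(\bar{\gg}-1+r).
\]
Because $18>12$, \eqref{eq22bisdic2015} always gives $|H|=|Q||U|>12(\gg-1)$, which I use throughout.

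Next I would show $\bar{\cX}$ is rational. If $\bar{\gg}\ge 2$, the identity gives $|Q|\le\gg-1$ and $|U|>12(\bar{\gg}-1+r)\ge12(\bar{\gg}-1)$; but $U$ is an abelian, hence (being a $p'$-group) tame, automorphism group of the genus $\ge 2$ curve $\bar{\cX}$, so the bound $|U|\le4\bar{\gg}+4$ for abelian automorphism groups applies, forcing $\bar{\gg}<2$, a contradiction. If $\bar{\gg}=1$, then $\bar{\cX}$ is an ordinary elliptic curve, $\gg-1=|Q|r$ with $k\ge 1$, and the $k$ branch points of $Q$ on $\bar{\cX}$ form a $U$-invariant set; since in odd characteristic the stabilizer of a point of an ordinary elliptic curve in its automorphism group has order $\le 6$, each such branch point has $U$-orbit of size $\ge|U|/6$ made up of branch points of $Q$, so $k\ge|U|/6$ and $\gg-1=|Q|r\ge\tfrac23|Q|k\ge\tfrac{|H|}{9}>\tfrac43(\gg-1)$, impossible. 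Hence $\bar{\gg}=0$, $\bar{\cX}\cong\mathbb{P}^1$, and $\bar{\gamma}=0$.

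Now the identity reads $\gg-1=|Q|(r-1)$, so $r>1$, hence $k\ge 2$ and $r-1\ge\tfrac{2k}{3}-1\ge\tfrac13$; then $|Q|\le3(\gg-1)$ and $|H|>12(\gg-1)$ give $|U|\ge 5$ (so the clause $|U|=3$ of \eqref{eq22bisdic2015} in fact never occurs). As $U$ is an abelian $p'$-subgroup of $\PGL(2,\K)=\aut(\bar{\cX})$, it is cyclic or a Klein four-group, and $|U|\ge 5$ forces $U$ cyclic. A cyclic subgroup of $\PGL(2,\K)$ fixes exactly two points of $\mathbb{P}^1$ and acts freely elsewhere; if $k\ge 3$, some branch point of $Q$ is not fixed by $U$, so its $U$-orbit has size $|U|$ and consists of branch points of $Q$, giving $k\ge|U|$, and then $\gg-1=|Q|(r-1)\ge|Q|(\tfrac23|U|-1)$ together with $|H|>12(\gg-1)$ forces $7|U|<12$, i.e. $|U|=1$, a contradiction. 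Hence $k=2$, and the short orbits $\Omega_1,\Omega_2$ of $Q$ are non-tame because $Q$ is a $p$-group.

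Finally I would pin down the short orbits of $H$. The distinct images $\bar{Q}_1,\bar{Q}_2$ of $\Omega_1,\Omega_2$ in $\mathbb{P}^1$ form a $U$-invariant set; a cyclic group interchanges two points of $\mathbb{P}^1$ only if it has order $2<|U|$, so $U$ fixes both, and having exactly two fixed points, $\{\bar{Q}_1,\bar{Q}_2\}$ is the fixed-point set of $U$. Thus $\bar{Q}_1,\bar{Q}_2$ are the only branch points of $\cX\to\bar{\cX}$ and also the only branch points of $\bar{\cX}\to\bar{\cX}/U=\cX/H$, so by multiplicativity of ramification indices in the tower $\cX\to\bar{\cX}\to\cX/H$, any $P$ with $H_P\ne 1$ lies over $\bar{Q}_1$ or $\bar{Q}_2$; since $U$ preserves each fibre $\Omega_i$ and $Q$ is transitive on it, $\Omega_i$ is a single $H$-orbit, so $\Omega_1,\Omega_2$ are the only short orbits of $H$. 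Substituting $\bar{\gamma}=0$, $k=2$ into \eqref{eq2deuring} for $Q$ gives $\gg-1=-|Q|+(|Q|-|\Omega_1|)+(|Q|-|\Omega_2|)$, i.e. $\gg(\cX)=|Q|-(|\Omega_1|+|\Omega_2|)+1$. The main obstacle is the first step: squeezing both the ordinariness of $\bar{\cX}$ and the exact differents $d_{P_i}=2(|Q_{P_i}|-1)$ out of \eqref{eq1} versus \eqref{eq2deuring}; after that everything is a finite case check whose only delicate point is that the constants $12$ (and $18$ when $|U|=3$) are essentially forced by the elliptic quotient $\bar{\gg}=1$ and by the small groups $|U|\le4$.
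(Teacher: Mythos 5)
Your proof is correct, and it follows the same skeleton as the paper's: split on the genus of $\bar{\cX}=\cX/Q$, kill the cases $\bar{\gg}\ge 2$ and $\bar{\gg}=1$ by numerical contradictions, then use the Dickson classification of subgroups of $\PGL(2,\K)$ and the Deuring--Shafarevich formula in the rational case. The differences are nonetheless worth recording. Your organizing identity $\gg-1=|Q|(\bar{\gamma}-1+r)$, played off against the Hurwitz inequality $\gg-1\ge|Q|(\bar{\gg}-1+r)$, packages in one stroke what the paper uses piecemeal, and it yields two byproducts the paper does not state: $\bar{\cX}$ is again ordinary and the second ramification groups of $Q$ vanish. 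The first byproduct is what lets you replace the paper's elliptic-stabilizer bound ($\le 12$ for $p=3$, from \cite[Theorem 11.94]{HKT}) by the bound $\le 6$ valid for \emph{ordinary} elliptic curves in odd characteristic (in fact $\le 2$ for $p=3$, since ordinary there forces $j\neq 0=1728$), which is a legitimate shortcut but one you should justify with a reference rather than assert. In the rational case your deduction $|Q|\le 3(\gg-1)$, hence $|U|\ge 5$, is a genuine simplification: it makes the cyclicity of $U$ immediate (ruling out the Klein four-group, which the paper's appeal to the classification glosses over), it replaces the paper's separate exclusion of $|U|=2$ via Nakajima's $p$-rank bound, and it shows the $|U|=3$ clause of \eqref{eq22bisdic2015} is never actually reached in the rational case. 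The exclusion of a third short orbit via $k\ge|U|$ and $7|U|<12$, and the final identification of the $H$-orbits and the genus formula, match the paper's argument in substance. The only points to polish are the faithfulness of the $U$-action on $\bar{\cX}$ (standard, but worth a line) and the citation for automorphism groups of ordinary elliptic curves.
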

\begin{proof}
From the Schur-Zassenhaus theorem \cite[Corollary 7.5]{MA}, $H=Q\rtimes U$. Set $|Q|=p^k$, $|U|=u$. Then $p\nmid u$. Furthermore, if $u=2$ then $|H|=2|Q|>9\gg(\cX)$ whence $|Q|>4.5\gg(\cX)$. From Nakajima's bound $\cX$, see \cite[Theorem 11.84]{HKT}, has zero $p$-rank. Therefore $u\geq 3$.

Three cases are treated separately according as the quotient curve $\bar{\cX}=\cX/Q$ has genus $\bar{\gg}$ at least $2$, or $\bar{\cX}$ is elliptic, or rational.

If $\gg(\bar{\cX})\geq 2$, then $\aut(\bar{\cX})$ has a subgroup isomorphic to $U$, and \cite[Theorem 11.79]{HKT} yields $4\gg(\bar{\cX})+4\geq |U|.$ Furthermore, from
the Hurwitz genus formula applied to $Q$, $\gg-1\geq |Q|(\gg(\bar{\cX})-1)$. Therefore, if $c=12$ or $c=18$, according as $|U|>3$ or $|U|=3$,
$$(4\gg(\bar{\cX})+4)|Q|\geq |U||Q|=|H|> c(\gg-1)= 12 |Q|(\gg(\bar{\cX})-1),$$
whence $$c< 4\frac{\gg(\bar{\cX})+1}{\gg(\bar{\cX})-1} \leq 12,$$
a contradiction.

If $\bar{\cX}$ is elliptic, then the cover $\cX|\bar{\cX}$ ramifies, otherwise $\cX$ itself would be elliptic. Thus, $Q$ has some short orbits. Take one of them together with its images
$o_1,\ldots,o_{u_1}$ under the action of $H$. Since $Q$ is a normal subgroup of $H$, $o=o_1\cup\ldots\cup o_{u_1}$ is a $H$-orbit of size $u_1p^v$ where $p^v=|o_1|=\ldots |o_{u_1}|$. Equivalently, the stabilizer of a point $P\in o$ has order
$p^{k-v}u/u_1$, and it is the semidirect product $Q_1\rtimes U_1$ where $|Q_1|=p^{k-v}$ and $|U_1|=u/u_1$ for a subgroup $Q_1$ of $Q$ and $U_1$ of $U$ respectively.
The point $\bar{P}$ lying under $P$ in the cover $\cX| \bar{\cX}$ is fixed by the factor group $\bar{U_1}=U_1Q/Q$. Since $\bar{\cX}$ is elliptic, \cite[Theorem 11.94]{HKT} implies $|\bar{U}_1|\leq 12$ for $p=3$ and $|\bar{U}_1|\leq 6$ for $p>3$.
As $\bar{U}_1\cong U_1$, this yields the same bound for $|U_1|$, that is, $u\leq 4 u_1$ for $p=3$ and $u\leq 6 u_1$ for $p>3$. 
Furthermore, $d_P\geq 2(p^{k-v}-1)\geq \textstyle\frac{4}{3} p^{k-v}$. From the Hurwitz genus formula applied to $Q$, if $p=3$ then
$$2\gg-2\geq 3^v u_1d_P\geq 3^v u_1(\textstyle\frac{4}{3} 3^{k-v}) \geq \textstyle\frac{4}{3} 3^k u_1\geq \textstyle\frac{1}{3} 3^k u=\textstyle\frac{1}{3}|Q||U|=\textstyle\frac{1}{3} |H|,$$
while for $p>3$,
$$2\gg-2\geq p^v u_1d_P\geq p^v u_1(\textstyle\frac{4}{3} p^{k-v}) \geq \textstyle\frac{4}{3} p^k u_1\geq \textstyle\frac{2}{9} p^ku=\textstyle\frac{2}{9}|Q||U|=\textstyle\frac{2}{9} |H|,$$
But this contradicts (\ref{eq22bisdic2015}).

If $\bar{\cX}$ is rational, then $Q$ has at least one short orbit. Furthermore, $\bar{U}=UQ/Q$ is isomorphic to a subgroup of $PGL(2,\K)\cong\aut(\bar{\cX})$. Since $U\cong \bar{U}$,  the classification of finite subgroups of $PGL(2,\K)$, see \cite{maddenevalentini1982}, shows that $U$ is cyclic, $\bar{U}$ fixes two points $\bar{P}_0$ and $\bar{P}_\infty$ but no nontrivial element in $\bar{U}$ fixes a point other than $\bar{P}_0$ and $\bar{P}_\infty$. Let $o_\infty$ and $o_0$ be the $Q$-orbits lying over $\bar{P}_0$ and $\bar{P}_\infty$, respectively. Obviously, $o_\infty$ and $o_0$ are short orbits of $H$. We show that $Q$ has at most two short orbits, the candidates being $o_\infty$ and $o_0$. By absurd, there is a $Q$-orbit $o$ of size $p^m$ with $m<k$ which lies over a point $\bar{P}\in \bar{\cX}$ different from both $\bar{P}_0$ and $\bar{P}_\infty$. Since the orbit of $\bar{P}$ in $\bar{U}$ has length $u$, then the $H$-orbit of a point $P\in o$ has length $up^m$. If $u>3$, the Hurwitz genus formula applied to $Q$ gives $$2\gg-2\ge -2p^k+up^m(p^{k-m}-1)\geq -2p^k+up^m\textstyle\frac{2}{3} p^{k-m}\geq -2p^k+\textstyle\frac{2}{3}up^k\geq
\textstyle\frac{2}{3}(u-3)p^k \geq \textstyle\frac{1}{6}up^k = \frac{1}{6}|H|,$$
a contradiction with $|H|> 12(\gg-1)$. If $u=3$ then $p>3$, and hence $$2\gg-2\ge -2p^k+3p^m(p^{k-m}-1)=p^k-3p^m>\textstyle\frac{1}{3} p^k,$$
whence $|H|=3p^k<18(\gg-1)$, a contradiction with (\ref{eq22bisdic2015}).
This proves that $H$ has exactly two short orbits. Assume that $Q$ has two short orbits. They are $o_\infty$ and $o_0$. If their lengths are $p^a$ and $p^b$ with $a,b<k$, the Deuring-Shafarevich formula applied to $Q$ gives
$$\gg(\cX)-1=\gamma(\cX)-1=-p^k+(p^k-p^a)+p^k-p^b$$
whence $\gg(\cX)=p^k-(p^a+p^b)+1>0$. The same argument shows that if $Q$ has just one short orbit, then $\gamma(\cX)=0$, a contradiction.
\end{proof}
\begin{lemma}  \label{GK61}
 Let $N$ be an automorphism group of an algebraic curve of even genus such that $|N|$ is even. Then any $2$-subgroup of $N$ has a cyclic subgroup of index $2$.
\end{lemma}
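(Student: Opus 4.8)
The plan is to exhibit the required subgroup as a point stabilizer. First I would dispose of the case where the $2$-subgroup $S\le N$ is cyclic, where there is nothing to prove, and assume henceforth that $S$ is non-cyclic. Since $p$ is odd, $S$ acts tamely on $\cX$, and by the description of point stabilizers recalled in Section~\ref{sec2} (each stabilizer is $Q_P\rtimes U$ with $Q_P$ a $p$-group and $U$ cyclic), the stabilizer $S_P$ of any $P\in\cX$ has trivial $p$-part, hence $S_P$ is cyclic. In particular $S$ fixes no point of $\cX$, for otherwise $S=S_P$ would be cyclic.

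Next I would pick an involution $\tau$ in the centre $Z(S)$; this is possible because a non-trivial finite $2$-group has non-trivial centre and contains an involution. As $\langle\tau\rangle$ is normal in $S$, the group $S$ permutes the finite set $\mathrm{Fix}(\tau)=\{P\in\cX:\tau(P)=P\}$. Applying the Hurwitz genus formula~(\ref{eq1}) to $\langle\tau\rangle$ — all differents being equal to $1$ by tameness — yields $|\mathrm{Fix}(\tau)|=2(\gg(\cX)+1)-4\,\gg(\cX/\langle\tau\rangle)$. Because $\gg(\cX)$ is even, the right-hand side is congruent to $2$ modulo $4$; in particular $\mathrm{Fix}(\tau)\ne\emptyset$.

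Then I would analyse the $S$-orbits contained in $\mathrm{Fix}(\tau)$. Each has length a power of $2$; none has length $1$, since $S$ fixes no point of $\cX$; and any orbit of length divisible by $4$ contributes $0$ modulo $4$ to $|\mathrm{Fix}(\tau)|$. Since $|\mathrm{Fix}(\tau)|\equiv 2\pmod 4$, there must be an $S$-orbit $\Omega\subseteq\mathrm{Fix}(\tau)$ of length exactly $2$. For $P\in\Omega$ the stabilizer $S_P$ then has index $2$ in $S$ and, being a point stabilizer, is cyclic — which is precisely the assertion.

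I do not anticipate a genuine obstacle: the inputs — tameness and cyclicity of point stabilizers for odd $p$, the Hurwitz formula, and non-triviality of the centre of a $p$-group — are all already at hand. The one point that needs care is the parity bookkeeping, namely checking that the even-genus hypothesis really forces $|\mathrm{Fix}(\tau)|\equiv 2\pmod 4$, and hence the existence of an orbit of length exactly $2$ rather than merely of orbits of even length; in the write-up one should also remark that the central-involution argument is needed only in the non-cyclic case.
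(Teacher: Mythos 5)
Your proof is correct. It rests on the same arithmetic kernel as the paper's argument --- the even-genus hypothesis forces $2\gg(\cX)-2\equiv 2\pmod 4$, while every ``large'' contribution to the ramification side of the Hurwitz genus formula is divisible by $4$ --- but you package it differently. The paper applies the (tame) Hurwitz formula directly to the $2$-subgroup $S$ itself, writing $2\gg(\cX)-2=|S|(2\gg(\cX/S)-2)+\sum_i(|S|-\ell_i)$ with the short-orbit lengths $\ell_i$ powers of $2$, and reads off modulo $4$ that some $\ell_i\in\{1,2\}$; the corresponding point stabilizer then has index at most $2$ in $S$ and is cyclic. You instead apply Hurwitz only to a central involution $\tau$, deduce $|\mathrm{Fix}(\tau)|\equiv 2\pmod 4$, and count $S$-orbits on $\mathrm{Fix}(\tau)$; this detour through $Z(S)$ obliges you to observe (correctly) that a non-cyclic $2$-group fixes no point, which is what rules out orbits of length $1$. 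The paper's version is one step shorter and needs no appeal to the centre, but yours makes fully explicit why only lengths $1$ and $2$ can survive the congruence, a point the paper's terse write-up leaves implicit (namely that the $\ell_i$ are powers of $2$ and that $|S|\equiv 0\pmod 4$ once $|S|\ge 4$). Both arguments use exactly the same inputs: tameness of $2$-groups in odd characteristic, cyclicity of tame point stabilizers, and the Hurwitz genus formula.
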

\begin{proof} Let $U$ be a subgroup of $\aut(\cX)$ of order $d=2^u\geq 2$, and $\bar{\cX}=\cX/N$ the arising quotient curve. From  the Hurwitz genus formula applied to $U$,
$$2\gg(\cX)-2=2(p-2)p^{n-1}=2^u(2\gg\bar{\cX}-2)+\sum_{i=1}^m(2^u-\ell_i)$$
where $\ell_1,\ldots,\ell_m$ are the short orbits of $U$ on $\cX$. Since $2(p-2)p^{n-1}\equiv 2 \pmod 4$ while
$2^u(2\gg \bar{\cX}-2)\equiv 0 \pmod 4$, some $\ell_i$ ($1\le i \le m$) must be either $1$ or $2$.  Therefore, $U$ or a subgroup of $U$ of index $2$ fixes a point of $\cX$ and hence
is cyclic.
\end{proof}
\section{The proof of Theorem \ref{princ}} The assertion holds for $\gg(\cX)=2$, as $|G|\leq 48$ for any solvable automorphism group $G$ of a genus two curve; see \cite[Proposition 11.99]{HKT}.
For $\gg(\cX)>2$, $\cX$ is taken by absurd for a minimal counterexample with respect the genera so that for any solvable subgroup of $\aut(\bar{\cX})$ of an ordinary curve $\bar{\cX}$ of genus $\gg(\bar{\cX})\ge 2$ we have $|\bar{G}|\leq 34(\gg+1)^{3/2}$. Two cases are treated separately.
\subsection{Case I: $G$ contains a minimal normal $p$-subgroup}
\label{CasoI}
\begin{proposition} \label{caseI}  Let $\cX$ be an ordinary algebraic curve of genus $\gg$ defined over an algebraically closed field $\mathbb{K}$ of odd characteristic $p>0$. If $G$ is a solvable subgroup of $\aut(\cX)$ containing a minimal normal $p$-subgroup $N$, then $|G|\leq 34 (\gg+1)^{3/2}$.
\end{proposition}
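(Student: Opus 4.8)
The plan is to run the minimal-counterexample induction prepared in the text. Assume $|G|>34(\gg+1)^{3/2}$ and set $Q:=O_p(G)$, the largest normal $p$-subgroup of $G$; since $N\le Q$ we have $Q\ne 1$. First I would check that $\bar{\cX}:=\cX/Q$ is again ordinary: comparing the Hurwitz genus formula (\ref{eq1}) with the Deuring--Shafarevich formula (\ref{eq2deuring}) for $Q$ and using that, $\cX$ being ordinary, the second ramification groups $Q_P^{(2)}$ are trivial (so $d_P=2(|Q_P|-1)$ at each ramification point $P$), the two formulas coincide once $\gamma(\bar{\cX})$ is replaced by $\gg(\bar{\cX})$; hence $\gamma(\bar{\cX})=\gg(\bar{\cX})$. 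Moreover $\bar{G}:=G/Q$ acts faithfully on $\bar{\cX}$, $O_p(\bar{G})=1$, and $\gg(\bar{\cX})<\gg$ since $Q$ acts nontrivially. I then split into three cases according to whether $\gg(\bar{\cX})$ is $\ge 2$, equals $1$, or equals $0$.

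If $\gg(\bar{\cX})\ge 2$, minimality gives $|\bar{G}|\le 34(\gg(\bar{\cX})+1)^{3/2}$, and since (\ref{eq1}) for $Q$ yields $\gg-1\ge |Q|(\gg(\bar{\cX})-1)$, hence $\gg(\bar{\cX})+1\le (\gg-1)/|Q|+2$, an elementary estimate gives $|G|=|Q|\,|\bar{G}|\le 34(\gg+1)^{3/2}$ once $\gg$ is large enough, the finitely many remaining genera being vacuous or handled via the sharper small-genus bounds (e.g.\ $|\aut(\bar{\cX})|\le 48$ for $\gg(\bar{\cX})=2$). If $\gg(\bar{\cX})=1$, then $\bar{G}=T\rtimes F$ with $T$ a translation subgroup of the ordinary elliptic curve $\bar{\cX}$ — necessarily a $p'$-group because $O_p(\bar{G})=1$ — and $|F|\le 6$ (indeed $|F|\le 2$ if $p=3$). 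The preimage $M$ of $T$ in $G$ is a normal subgroup $M=Q\rtimes\widetilde{T}$ with $\widetilde{T}\cong T$ abelian, and were $|M|$ above the threshold in (\ref{eq22bisdic2015}), Lemma \ref{terrible} would force $\cX/Q$ to be rational, contradicting $\gg(\bar{\cX})=1$; hence $|M|\le 18(\gg-1)$ and $|G|=|M|\,|F|\le 108(\gg-1)$, which is below $34(\gg+1)^{3/2}$.

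The decisive case is $\gg(\bar{\cX})=0$, so $\bar{\cX}=\PG(1,\K)$ and $\bar{G}$ is a solvable subgroup of $\PGL(2,\K)$ with $O_p(\bar{G})=1$; by the classification of finite subgroups of $\PGL(2,\K)$, $\bar{G}$ is $A_4$, $S_4$, a cyclic $p'$-group, or a dihedral group $D_m$ with $p\nmid m$. If $\bar{G}\cong A_4$ or $S_4$ it has no orbit of length $<4$ on $\PG(1,\K)$, so $Q$ has at least four short orbits, and (\ref{eq2deuring}) for $Q$ forces $\gg-1\ge 4|Q|(1-1/p)\ge\tfrac{8}{3}|Q|$, whence $|G|\le 9(\gg-1)$. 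If $\bar{G}$ is cyclic or dihedral it contains a cyclic $p'$-subgroup $\bar{C}$ of index at most $2$ fixing two points of $\PG(1,\K)$; its preimage is $C=Q\rtimes U$ with $U\cong\bar{C}$ cyclic (Schur--Zassenhaus), of index at most $2$ in $G$, so $|C|\ge|G|/2>17(\gg+1)^{3/2}$ exceeds the threshold in (\ref{eq22bisdic2015}). Hence Lemma \ref{terrible} applies to $C$: the curve $\cX/Q$ is rational, $Q$ has exactly two short orbits $\Omega_1,\Omega_2$, and $\gg=|Q|-|\Omega_1|-|\Omega_2|+1$. Writing $|Q|=p^n$ and $|\Omega_i|=p^{a_i}$ with $a_1,a_2<n$, one also obtains $a_1+a_2\le n$, since the inertia groups over the two branch points generate $Q$.

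It remains to bound $|U|$, which is the heart of the matter. The cover $\cX\to\cX/Q=\PG(1,\K)$ is an elementary abelian $p$-cover branched only over the images of $\Omega_1,\Omega_2$; ordinariness makes all higher ramification breaks equal $1$, so after reduction modulo $\wp(\K(s))$ the Artin--Schreier datum of each character of $Q$ is represented by a Laurent polynomial $as+c/s$, yielding an $\mathbb{F}_p$-linear embedding of the character group into $\K^2$ whose two coordinate projections have kernels of $\mathbb{F}_p$-dimensions $a_2$ and $a_1$ and intersection $0$ (no nontrivial unramified $p$-cover of $\PG(1,\K)$). A generator of $U$ acts on $\PG(1,\K)$ as $s\mapsto\lambda s$, hence on $\K^2$ as $\mathrm{diag}(\lambda,\lambda^{-1})$; preserving the image and its two coordinate kernels makes each of them a vector space over $\mathbb{F}_p(\lambda)=\mathbb{F}_{p^e}$, which forces $e$ to divide each positive one of $a_1,a_2,n-a_1,n-a_2$, hence $e\le n/2$ and $|U|=\mathrm{ord}(\lambda)\le p^e-1<\sqrt{|Q|}$, except in the degenerate case $a_1=a_2=0$, where the relevant additive map intertwines multiplication by $\lambda$ with multiplication by $\lambda^{-1}$, so the induced endomorphism of $\mathbb{F}_{p^e}$ is a Frobenius power $\mathrm{Frob}^k$ with $\lambda^{p^k+1}=1$, forcing $e=2k$ and $|U|\le p^{e/2}+1\le\sqrt{|Q|}+1$. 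Combining $|G|\le 2|Q|\,|U|\le 2|Q|(\sqrt{|Q|}+1)$ with $|Q|=\gg-1+p^{a_1}+p^{a_2}$ — where $\min(p^{a_1},p^{a_2})\le\sqrt{|Q|}$ and the larger is at most $|Q|/p$ — a short optimization in the two regimes ("both orbits small", "one orbit large") gives $|G|\le 34(\gg+1)^{3/2}$; the extremal configuration is $a_1=a_2=n/2$, matching Stichtenoth's curve (\ref{oct6eq1}) with $m=1$. The main obstacle is exactly this square-root bound on $|U|$: Serre's fixed-point-free action alone gives only $|U|\mid|Q_P|-1$, which merely reproduces a quadratic (Nakajima-type) estimate, and one must use simultaneously the rigidity of ordinary covers (all breaks equal $1$) and the compatibility of the $U$-action at both branch points (the two coordinates transforming by $\lambda$ and $\lambda^{-1}$) to extract the extra saving that yields the $3/2$ exponent.
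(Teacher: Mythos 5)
Your proposal follows the paper's outer skeleton (minimal counterexample, quotient by $Q=O_p(G)$, trichotomy on $\gg(\cX/Q)$, reduction via Lemma \ref{terrible} to a subgroup $Q\rtimes U$ of index at most $2$ with exactly two short $Q$-orbits), but the decisive step --- bounding $|U|$ by roughly $\sqrt{|Q|}$ --- is carried out by a genuinely different method. The paper splits into three sub-cases according to whether $\Omega_1,\Omega_2$ are trivial or not: when both are nontrivial it shows the stabilizers are normal and $Q=Q_P\times Q_R$, then invokes $|U|\leq |Q_P|-1$; when exactly one is trivial it uses Maschke's theorem and descends to a smaller ordinary curve; when both are trivial it proves the much stronger bound $|U|\leq 2$ via Riemann--Roch spaces and an explicit additive polynomial. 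You instead encode the elementary abelian cover $\cX\to\PG(1,\K)$ by its Artin--Schreier data $as+c/s$ (legitimate because ordinariness forces every break to equal $1$), obtaining an $\mathbb{F}_p$-subspace $W\leq\K^2$ of dimension $n$ whose coordinate kernels have dimensions $a_1,a_2$ with $a_1+a_2\leq n$ and which is invariant under $\mathrm{diag}(\lambda,\lambda^{-1})$; the resulting $\mathbb{F}_{p^e}$-module structure gives $e\leq n/2$, hence $|U|\leq p^{n/2}-1$, in every non-degenerate case, while the semilinear relation $\phi(\lambda x)=\lambda^{-1}\phi(x)$ in the case $a_1=a_2=0$ forces $\lambda^{-1}=\lambda^{p^k}$ with $e\mid 2k$ and $k<e$, so $k\in\{0,e/2\}$ and $|U|\mid p^{e/2}+1$, exactly as you assert. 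This treats the paper's three sub-cases uniformly and is arguably cleaner; the price is that you do not recover the sharp conclusion $|U|\leq 2$ (nor the explicit hyperelliptic model) in the totally ramified case, only $|U|\leq\sqrt{|Q|}+1$, which still suffices for the exponent $3/2$. A few repairs are needed but none is structural: in the $\mathrm{Alt}_4,\mathrm{Sym}_4$ case the Deuring--Shafarevich count should read $\gg-1\geq -|Q|+4|Q|(1-1/p)\geq\frac{5}{3}|Q|$ (you dropped the $-|Q|$ term; the conclusion survives); in the elliptic case $\bar{G}$ need not split over its translation subgroup $T$, though only $|G|=|M|\cdot|\bar{G}/T|$ with $|\bar{G}/T|\leq 6$ is actually used; and the inductive step for $\gg(\cX/Q)\geq 2$ requires, exactly as in the paper itself, a separate verification for small genera, since $(\gg+1)^{3/2}/(\gg-1)$ is not monotone.
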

\begin{proof}
Take the largest normal $p$-subgroup $Q$ of $G$. Let $\bar{\cX}$ be the quotient curve of $\cX$ with respect to $Q$ and let $\bar{G}=G/Q$. The quotient group $\bar{G}$ is a subgroup of $\aut(\bar{\cX})$ and it has no normal $p$-subgroup, otherwise $G$ would have a normal $p$-subgroup properly containing $Q$. For $\bar{\gg}=\gg(\bar{\cX})$ three cases may occur, namely $\bar{\gg} \geq 2$, $\bar{\gg}=1$ or $\bar{\gg}=0$. If $\bar{\gg} \geq 2$, from the Hurwitz genus formula,
$$2\gg-2 \geq |Q|(2\bar{\gg}-2)=\frac{|G|}{|\bar{G}|}(2\bar{\gg}-2)$$
whence $|\bar{G}|> 34(\bar{\gg}+1)^{3/2}$. Since $\bar{\cX}$ is still ordinary, this contradicts our choice of $\cX$ to be a minimal counterexample.
If $\bar{\gg}=1$ then the cover $\mathbb{K}(\cX)| \mathbb{K}(\bar{\cX})$ ramifies. Take short orbit $\Delta$ of $Q$. Let $\Gamma$ be the non-tame short orbit of $G$ that contains $\Delta$. Since $Q$ is normal in $G$, the orbit $\Gamma$ partitions into short orbits of $Q$ whose components have the same length which is equal to $|\Delta|$. Let $k$ be the number of the $Q$-orbits contained in $\Gamma$. Then,
$$|G_P|= \frac{|G|}{k|\Delta|},$$
holds for every $P \in \Gamma$. Moreover, the quotient group $G_PQ/Q$ fixes a place on $\bar{\cX}$. Now, from \cite[Theorem 11.94 (ii)]{HKT},
$$\frac{|G_PQ|}{|Q|} = \frac{|G_P|}{|G_P \cap Q|} = \frac{|G_P|}{|Q_P|}\leq 12.$$
From this together with the Hurwitz genus formula and  \cite[Theorem 2 (i)]{N},
$$2\gg-2 \geq 2k|\Delta|(|Q_P|-1)\geq 2k|\Delta|\frac{|Q_P|}{2} \geq \frac{k|\Delta||G_P|}{12}=\frac{|G|}{12},$$
which contradicts our hypothesis $|G| >34(\gg+1)^{3/2}$.

It turns out that $\bar{\cX}$ is rational. Therefore $\bar{G}$ is isomorphic to a subgroup of $PGL(2,\mathbb{K})$ which contains no normal $p$-subgroup. From the classification of finite subgroups of $PGL(2,\K)$, see \cite{maddenevalentini1982}, $\bar{G}$ is a prime to $p$ subgroup which is either cyclic, or dihedral, or isomorphic to one of the the groups $\rm{Alt}_4,\rm{Sym}_4$. In all cases, $\bar{G}$ has a cyclic subgroup $U$ of index $\le 6$ and of order distinct from $3$. We may dismiss all cases but the cyclic one assuming that $G=Q\rtimes U$ with $|G|\geq \textstyle\frac{5}{3}(\gg(\cX)+1)^{3/2}.$ Then $|G|> 12(\gg-1)$. Therefore, Lemma \ref{terrible} applies to $G$. Thus, $Q$ has exactly two (non-tame) orbits, say $\Omega_1$ and $\Omega_2$, and they are also the only short orbits of $G$. More precisely,
\begin{equation}
\label{eq1zi} \gamma-1=|Q|-(|\Omega_1|+|\Omega_2|).
\end{equation}
We may also observe that $G_P$ with $P\in \Omega_1$ contains a subgroup $V$ isomorphic to $U$. In fact, $|Q||U|=|G|=|G_P||\Omega_1|=|Q_P\rtimes V||\Omega_1|=|V||Q_P||\Omega_1|$ with a prime to $p$ subgroup $V$ fixing $P$, whence $|U|=|V|$. Since $V$ is cyclic the claims follows.

We go on with the case where both $\Omega_1$ and $\Omega_2$ are nontrivial, that is, their lengths are at least $2$.

Assume that $Q$ is non-abelian and look at the action of its center $Z(Q)$ on $\cX$. Since $Z(Q)$ is a nontrivial normal subgroup of $G$, arguing as before we get that the factor group $\cX/Z(Q)$ is rational, and hence the Galois cover $\cX|(\cX/Z(Q))$ ramifies at some points. In other words, there is a point $P\in \Omega_1$ (or $R\in \Omega_2$) such that some nontrivial subgroup $T$ of $Z(Q)$ fixes $P$ (or $Q$). Suppose that the former case occurs. Since $\Omega_1$ is a $Q$-orbit, $T$ fixes $\Omega_1$ pointwise.

The group $G$ induces a permutation group on $\Omega_1$ and let $M_1$ be the kernel of this permutation representation. Obviously, $T$ is a nontrivial $p$-subgroup of $M_1$. Therefore
$M$ contains some but not all elements from $Q$. Since both $M_1$ and $Q$ are normal subgroups of $G$,  $N=M_1\cap Q$ is a nontrivial normal $p$-subgroup of $G$. From (ii), the quotient curve $\tilde{\cX}=
\cX/N$ is rational, and hence the factor group $\tilde{G}=G/N$ is isomorphic to a subgroup of $PGL(2,\mathbb{K})$. Since $1\lvertneqq N\lvertneqq Q$, the order of $\tilde{G}$ is divisible by $p$. From the classification of subgroups of $PGL(2,\mathbb{K})$, see \cite{maddenevalentini1982}, $\tilde{G}=\tilde{Q}\rtimes \tilde{U}$ where
$\tilde{Q}$ is an elementary abelian $p$-group of order $q$ and $\tilde{U}\cong UN/N\cong U$ with $|\tilde{U}|=|U|$ is a divisor of $q-1$.

This shows that $Q$ acts on $\Omega_1$ as an abelian transitive permutation group. Obviously this holds true when $Q$ is abelian. Therefore, the action of $Q$ on $\Omega_1$ is sharply transitive. In terms of $1$-point stabilizers of $Q$ on $\Omega_1$, we have $Q_P=Q_{P'}$ for any $P,P'\in \Omega_1$. Moreover, $Q_P=N$, and hence $Q_P$ is a normal subgroup of $G$.

Furthermore, since $\cX$ is an ordinary curve, $Q_P$ is an elementary abelian group by \cite[Theorem 2 (i)]{N} and \cite[Theorem 11.74 (iii)]{HKT}.

The quotient curve $\cX/Q_P$ is rational and its automorphism group contains the factor group $Q/Q_P$. Hence, exactly one of the $Q_P$-orbits is preserved by $Q$. Since $\Omega_1$ is a $Q$-orbit consisting of fixed points of $Q_P$, $\Omega_2$ must be a $Q_P$-orbit. Similarly, if $Z(Q)\neq Q_P$, the factor group $Z(Q)Q_P/Q_P$ is an automorphism group of $\cX/Q_P$ and hence exactly one of the $Q_P$-orbits is preserved by $Z(Q)$. Either $Z(Q)$ fixes a point in $\Omega_1$ but then
$Z(Q)=Q_P$, or $\Omega_2$ is a $Z(Q)$-orbit. This shows that either $Z(Q)=Q_P$ or $Z(G)$ acts transitively on $\Omega_2$.

Two cases arise according as $Q_P$ is sharply transitive and faithful on $\Omega_2$ or some nontrivial element in $Q_P$ fixes $\Omega_2$ pointwise.

If some nontrivial element in $Q_P$ fixes $\Omega_2$ pointwise then the kernel $M_2$ of the permutation representation of $H$ on $\Omega_2$ contains a nontrivial $p$-subgroup. Hence the above results extends from $\Omega_1$ to $\Omega_2$, and $Q_R$ is a normal subgroup of $Q$.

If $Q_P$ is (sharply) transitive on $\Omega_2$ then the abelian group $Z(Q)Q_P$ acts on $\Omega_2$ as a sharply transitive permutation group, as well. Hence either $Z(Q)=Q_P$, or as before $M_2$ contains a nontrivial $p$-subgroup, and $Q_R$ is a normal subgroup of $Q$. In the former case, $Q=Q_PQ_R$ and $Q_R\cap Q_P=\{1\}$, and $Z(Q)=Q_P$ yields that
\begin{equation}
\label{eq4zi}
Q=Q_P\times Q_R.
\end{equation}
This shows that $Q$ is abelian, and hence $|Q|\leq 4\gg+4$. Also, either $|Q_P|$ (or $|Q_R|$) is at most $\sqrt{4\gg+4}$. Since $\cX$ is an ordinary curve, the second ramification group $G_P^{(2)}$ at $P\in \Omega_1$ is trivial. For $G_P=Q_P\rtimes V$, \cite[Lemma 11.81]{HKT} gives  $|U|=|V|\leq |Q_P|-1$. Hence
$|U|<|Q_P|\leq \sqrt{|Q|}\leq  \sqrt{4\gg+4}$ whence
\begin{equation}
\label{eq5zi}
|G|=|U||Q|\leq 8 (\gg+1)^{3/2}.
\end{equation}
If $Q_R$ is a normal subgroup, take a point $R$ from $\Omega_2$, and look at the subgroup $Q_{P,R}$ of $Q_P$ fixing $R$. Actually, we prove that
that either $Q_{P,R}=Q_P$ or $Q_{P,R}$ is trivial. Suppose that  $Q_{P,R}\neq \{1\}$. Since $Q_{P,R}=Q_P\cap Q_R$ and both $Q_P$ and $Q_R$ are normal subgroups of $G$, them same holds for $Q_{P,R}$. By (ii), the quotient curve $\cX/Q_{P,R}$ is rational and hence its automorphism group $Q/Q_{P,R}$ fixes exactly one point. Furthermore, each point in $\Omega_2$ is totally ramified.
Therefore, $Q_R=Q_{P,R}$, otherwise $Q_R/Q_{P,R}$ would fix any point lying under a point in $\Omega_1$ in the cover $\cX|(\cX/Q_{P,R})$.

It turns out that either $Q_P=Q_R$ or $Q_P\cap Q_R=\{1\}$, whenever $P\in \Omega_1$ and $R\in \Omega_2$.

In the former case, from the Deuring-Shafarevi\v{c} formula applied to $Q_P$,
$$\gamma-1=-|Q_P|+|\Omega_1|(|Q_P|-1)+|\Omega_2|(|Q_{P}|-1)=-|Q_P|+|Q|-|\Omega_1|+|Q|-|\Omega_2|.$$
This together with (\ref{eq1zi}) give $Q=Q_P$, a contradiction.

Therefore, the latter case must hold. Thus $Q=Q_P\times Q_R$ and $Q_P$ (and also $Q_R$) is an elementary abelian group since is isomorphic to a $p$-subgroup of $PGL(2,\mathbb{K})$. Also,
$|Q_P|=|Q_R|=\sqrt{|Q|}$. Since $Q$ is abelian, this yields $|Q_P|\leq \sqrt{4\gg+4}$.
Now, the argument used after (\ref{eq4zi}) can be employed to prove (\ref{eq5zi}). This ends the proof in the case where both $\Omega_1$ and $\Omega_2$ are nontrivial.

Suppose next $\Omega_1=\{P\}$ and $|\Omega_2| \geq 2$. Then $G$ fixes $P$, and hence $G=Q \rtimes U$ with an elementary abelian $p$-group $Q$.
Furthermore, $G$ has a permutation representation on $\Omega_2$ with kernel $K$. As $\Omega_2$ is a short orbit of $Q$, the stabilizer $Q_R$ of $R\in \Omega_2$ in $Q$ is nontrivial. Since $Q$ is abelian, this yields that $K$ is nontrivial, and hence it is a nontrivial elementary abelian normal subgroup of $G$. In other words, $Q$ is an $r$-dimensional vector space $V(r,p)$  over a finite field $\mathbb{F}_p$ with $|Q|=p^r$, the action of each nontrivial element of $U$ by conjugacy is a nontrivial automorphism of $V(r,p)$, and $K$ is a $K$-invariant subspace. By Maschke's theorem, see for instance \cite[Theorem 6.1]{MA}, $K$ has a complementary $U$-invariant subspace. Therefore, $Q$ has a subgroup $M$ such that $Q=K\times M$, and $M$ is a normal subgroup of $G$. Since $K\cap M=\{1\}$, and $\Omega_2$ is an orbit of $Q$, this yields $|M|=|\Omega_2|$. The factor group $G/M$ is an automorphism group of the quotient curve $\cX/M$, and $Q/M$ is a nontrivial $p$-subgroup of $G/M$ whereas $G/M$ fixes two points on $\cX/M$. Therefore the quotient curve $\cX/M$ is not rational since the $2$-point stabilizer in the representation of $PGL(2,\mathbb{K})$ as an automorphism group of the rational function field is a prime to $p$ (cyclic) group. We show that $\cX/M$ is neither elliptic. From the Deuring-Shafarevi\v{c} formula. $\gg(\cX)-1=\gamma(\cX)-1= -|Q| +1 + |\Omega_2|$, and so $\gg(\cX)$ is even. Since $M$ is a normal subgroup of odd order, $\gg(\cX)\equiv 0 \pmod 2$ yields that $\gg(\cX/M)\equiv 0 \pmod 2$. In particular, $\gg(\cX/M) \ne 1$. Therefore, $\gg(\cX/M) \geq 2$. At this point we may repeat our previous argument and prove $|G/M|>34(\gg(\cX/M)+1)^{3/2}$. Again, a contradiction to our choice of $\cX$ to be a minimal counterexample, which ends the proof in the case where just one of $\Omega_1$ and $\Omega_2$ is trivial.

We are left with the case where both short orbits of $Q$ are trivial. Our goal is to prove a much stronger bound for this case, namely $|U|\leq 2$ whence
\begin{equation}
\label{eq10oct2016} |G|\leq 2(\gg(\cX)+1).
\end{equation}
We also show that if equality holds then $\cX$ is a hyperelliptic curve with equation
\begin{equation}
\label{eq010oct2016} f(U)=aT+b+cT^{-1},\,\,a,b,c\in \mathbb{K}^*,
\end{equation}
where $f(U)\in \mathbb{K}[U]$ is an additive polynomial of degree $|Q|$.

Let $\Omega_1=\{P_1\}$ and $\Omega_2=\{P_2\}$. Then $Q$ has two fixed points $P_1$ and $P_2$ but no nontrivial element in $Q$ fixes a point of $\cX$ other than $P_1$ and $P_2$.
From the Deuring-Shafarevi\v{c} formula
\begin{equation} \label{eq4n}
\gg(\cX)+1=\gamma(\cX)+1=|Q|.
\end{equation}
Therefore, $|U|\leq \gg(\cX)$.
Actually, for our purpose, we need a stronger estimate, namely $|U|\le 2$. To prove the latter bound, we use some ideas from Nakajima's paper \cite{N}.
regarding the Riemann-Roch spaces $\mathcal{L}(\mathbf{D})$ of certain divisors $\mathbf{D}$ of $\mathbb{K}(\cX)$. Our first step is to show
\begin{enumerate}
\item[(i)] $\dim_{\mathbb{K}} \mathcal{L}((|Q|-1)P_1)=1$,
\item[(ii)] $\dim_{\mathbb{K}} \mathcal{L}((|Q|-1)P_1+P_2)\geq 2$.
\end{enumerate}
Let $\ell \geq 1$ be the smallest integer such that $\dim_{\mathbb{K}} \mathcal{L}(\ell P_1)=2$, and take $x \in \mathcal{L}(\ell P_1)$ with $v_{P_1}(x)=-\ell$. As $Q=Q_{P_1}$, the Riemann-Roch space $\mathcal{L}(\ell P_1)$ contains all $c_\sigma=\sigma(x)-x$ with $\sigma\in Q$. This yields $c_\sigma\in \mathbb{K}$ by $v_{P_1}(c_\sigma) \geq -\ell+1$ and our choice of $\ell$ to be minimal. Also, $Q=Q_{P_2}$ together with $v_{P_2}(x) \geq 0$ show $v_{P_2}(c_\sigma) \geq 1$. Therefore $c_\sigma =0$ for all $\sigma \in Q$, that is, $x$ is fixed by $Q$. From
$\ell=[\mathbb{K}(\cX):\mathbb{K}(x)]=[\mathbb{K}:\mathbb{K}(\cX)^Q][\mathbb{K(\cX)}^Q:\mathbb{K}(x)]$ and $|Q|=[\mathbb{K}:\mathbb{K}(\cX)^Q]$, it turns out that $\ell$ is a multiple of $|Q|$. Thus $\ell>|Q|-1$ whence (i) follows. From the Riemann-Roch theorem, $\dim_{\mathbb{K}}\mathcal{L}((|Q|-1)P_1+P_2)\geq |Q|-g+1=2$ which proves (ii).

Let $d \geq 1$ be the smallest integer such that $\dim_{\mathbb{K}} \mathcal{L}(dP_1+P_2)=2$. From (ii)
\begin{equation} \label{in1}
 d \leq |Q|-1.
\end{equation}
Let $\alpha$ be a generator of the cyclic group $U$. Since $\alpha$ fixes both points $P_1$ and $P_2$, it acts on $\mathcal{L}(dP_1+P_2)$ as a $\mathbb{K}$-vector space automorphism $\bar{\alpha}$. If $\bar{\alpha}$ is trivial then $\alpha(u)=u$ for all $u\in\mathcal{L}(dP_1+P_2)$. Suppose that $\bar{\alpha}$ is nontrivial. Since $U$ is a prime to $p$ cyclic group, $\bar{\alpha}$ has two distinct eigenspaces, so that $\mathcal{L}(dP_1+P_2)=\mathbb{K}\oplus \mathbb{K}u$ where $u\in \mathcal{L}(dP_1+P_2)$ is an eigenvector of $\bar{\alpha}$ with eigenvalue $\xi\in \mathbb{K}^*$ so that $\bar{\alpha}(u)=\xi u$ with $\xi^{|U|}=1$. Therefore there is $u\in \mathcal{L}(dP_1+P_2)$ with $u\neq 0$ such that $\alpha(u)=\xi u$ with $\xi^{|U|}=1$.  The pole divisor
of $u$ is
\begin{equation} \label{equ1}
\Div(u)_\infty = dP_1+P_2.
\end{equation}
Since $Q=Q_{P_1}=Q_{P_2}$, the Riemann-Roch space $\mathcal{L}(dP_1+P_2)$ contains $\sigma(u)$ and hence it contains all
$$\theta_\sigma=\sigma(u)-u,\,\, \sigma\in Q.$$
By our choice of $d$ to be minimal, this yields $\theta_\sigma\in \mathbb{K}$, and then defines the map $\theta$ from $Q$ into $\mathbb{K}$ that takes $\sigma$ to $\theta_\sigma$. More precisely, $\theta$ is
a homomorphism from $Q$ into the additive group $(\mathbb{K},+)$ of $\mathbb{K}$ as the following computation shows:
$$\theta_{\sigma_1\circ\sigma_2}=(\sigma_1\circ\sigma_2)(u)-u=\sigma_1(\sigma_2(u)-u+u)-u=\sigma_1(\theta_{\sigma_2})+\sigma_1(u)-u=\theta_{\sigma_2}+\theta_{\sigma_1}=\theta_{\sigma_1}+\theta_{\sigma_2}.$$  Also, $\theta$ is injective. In fact, if $\theta_{\sigma_0}=0$ for some $\sigma_0 \in Q \setminus \{1\}$, then $u$ is in the fixed field of $\sigma_0$, which is impossible since $v_{P_2}(u)=-1$ whereas $P_2$ is totally ramified in the cover $\cX|(\cX/\langle \sigma_p \rangle)$. The image $\theta(Q)$ of $\theta$ is an additive subgroup of $\mathbb{K}$ of order $|Q|$. The smallest subfield of $\mathbb{K}$ containing $\theta(Q)$ is a finite field $\mathbb{F}_{p^m}$ and hence $\theta(Q)$ can be viewed as a linear subspace of $\mathbb{F}_{p^m}$  considered as a vector space over $\mathbb{F}_p$.  Therefore the polynomial
\begin{equation}
\label{eq210oct2016}
f(U)=\prod_{\sigma \in Q} (U-\theta_\sigma)
\end{equation}
is a linearized polynomial over $\mathbb{F}_p$; see \cite[Section 4, Theorem 3.52]{LN}. In particular, $f(U)$ is an additive polynomial of degree $|Q|$; see also \cite[Chapter V, $\mathsection \ 5$]{Serre}. Also, $f(U)$ is separable as $\theta$ is injective. From (\ref{eq210oct2016}), the pole divisor of $f(u)\in \mathbb{K}(\cX)$ is
\begin{equation}
\label{eq310oct2016}
\div(f(u))_\infty=|Q|(dP_1+P_2).
\end{equation}
For every $\sigma_0\in Q$,
$$\sigma_0(f(u))=\prod_{\sigma\in Q} (\sigma_0(u)-\theta_{\sigma})=\prod_{\sigma\in Q}(u+\theta_{\sigma_0}-\theta_{\sigma})=\prod_{\sigma\in Q}(u-\theta_{\sigma\sigma_0^{-1}})=\prod_{\sigma\in Q}(u-\theta_{\sigma})=f(u).$$
Thus $f(u) \in \mathbb{K}(\cX)^Q$. Furthermore, from $\alpha\in N_G(Q)$, for every $\sigma\in Q$ there is
$\sigma'\in Q$ such that $\alpha\sigma=\sigma'\alpha$. Therefore
$$\alpha(f(u))=\prod_{\sigma\in Q}(\alpha(\sigma(u)-u))=\prod_{\sigma\in Q}(\alpha(\sigma(u))-\xi u)=\prod_{\sigma\in Q}(\sigma'(\alpha(u))-\xi u)=\prod_{\sigma\in Q}(\sigma'(\xi u)-\xi u)=\xi f(u).$$
This shows that if $R\in\cX$ is a zero of $f(u)$ then $\supp(\div(f(u)_0))$ contains the $U$-orbit of $R$ of length $|U|$. Actually, since $\sigma(f(u))=f(u)$ for $\sigma\in Q$, $\supp(\div(f(u)_0))$ contains the $G$-orbit of $R$ of length $|G|=|Q||U|$. This together with (\ref{eq310oct2016}) give
\begin{equation}
\label{eq410oct2016}
|U|\big|(d+1).
\end{equation}
On the other hand, $\mathbb{K}(\cX)^Q$ is rational. Let $\bar{P_1}$ and $\bar{P_2}$ the points lying under $P_1$ and $P_2$, respectively, and let $\bar{R_1},\bar{R_2},\ldots \bar{R_k}$ with $k=(d+1)/|U|$ be the points lying under the zeros of $f(u)$ in the cover $\cX|(\cX/Q)$. We may represent $\mathbb{K}(\cX)^Q$ as the projective line $\mathbb{K}\cup \{\infty\}$ over $\mathbb{K}$ so that $\bar{P_1}=\infty$, $\bar{P_1}=0$ and $\bar{R}_i=t_i$ for $1\leq i \leq k$. Let $g(t)=t^d+t^{-1}+h(t)$
where $h(t)\in \mathbb{K}[t]$ is a polynomial of degree $k=(d+1)/|U|$ whose roots are $r_1,\ldots,r_k$. It turns out that $f(u),g(t)\in \mathbb{K}(\cX)$ have the same pole and zero divisors, and hence
\begin{equation}
\label{eq510oct2016} cf(u)=t^d+t^{-1}+h(t),\,\ c\in \mathbb{K}^*.
\end{equation}
We prove that $\mathbb{K}(\cX)=\mathbb{K}(u,t)$. From \cite{sullivan1975}, see also \cite[Remark 12.12]{HKT}, the polynomial $cTf(X)-T^{d+1}-1-h(T)T$ is irreducible, and the plane curve $\mathcal{C}$ has genus $\gg(\mathcal{C})=\ha (q-1)(d+1)$. Comparison with (\ref{eq4n}) shows $\mathbb{K}(\cX)=\mathbb{\mathcal{C}}$ and $d=1$ whence $|U|\leq 2$. If equality holds then
$\deg\,h(T)=1$ and $\cX$ is a hyperelliptic curve
with equation (\ref{eq010oct2016}).
\end{proof}

\subsection{Case II: $G$ contains no minimal normal $p$-subgroup}
\label{CasoII}
\begin{proposition} \label{caseII}  Let $\cX$ be an ordinary algebraic curve of genus $\gg$ defined over a field $\mathbb{K}$ of odd characteristic $p>0$. If $G$ is a solvable subgroup of $\aut(\cX)$ with a minimal normal subgroup $N$ satisfying Case II, then $|G|\leq 34 (\gg(\cX)+1)^{3/2}$.
\end{proposition}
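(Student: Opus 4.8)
The plan is to exploit the hypothesis that $G$ has no minimal normal $p$-subgroup, so that any minimal normal subgroup $N$ of the solvable group $G$ is an elementary abelian $r$-group for some prime $r\neq p$, or possibly that every minimal normal subgroup is of order prime to $p$. First I would pass to the quotient curve $\bar{\cX}=\cX/N$ and set $\bar{G}=G/N$; since $|N|$ is prime to $p$, the curve $\bar{\cX}$ is again ordinary (the Deuring--Shafarevich formula applies to the tame, possibly ramified cover $\cX\to\bar{\cX}$ and one checks $\gg(\bar{\cX})=\gamma(\bar{\cX})$), and $\bar{G}$ is a solvable automorphism group of $\bar{\cX}$. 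The minimality of $\cX$ as a counterexample then yields $|\bar{G}|\leq 34(\gg(\bar{\cX})+1)^{3/2}$ provided $\gg(\bar{\cX})\geq 2$, and one disposes of the cases $\gg(\bar{\cX})\in\{0,1\}$ exactly as in Proposition \ref{caseI}: if $\bar{\cX}$ is rational then $\bar{G}\leq \PGL(2,\K)$, hence by \cite{maddenevalentini1982} $\bar{G}$ has a cyclic subgroup of index at most $6$ and, because $\bar{G}$ now contains no normal $p$-subgroup, it must actually be a $p'$-group, so $|G|=|N||\bar{G}|$ is easily bounded via the ramification of $\cX\to\bar{\cX}$; if $\bar{\cX}$ is elliptic, the Hurwitz genus formula applied to $N$ together with \cite[Theorem 11.94]{HKT} bounding the stabilizers gives $2\gg-2\geq \tfrac{1}{6}|G|$ or better, contradicting the standing hypothesis $|G|>34(\gg+1)^{3/2}$.

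The substantive case is $\gg(\bar{\cX})\geq 2$, where one has $|N||\bar{G}|=|G|>34(\gg(\cX)+1)^{3/2}$ and $|\bar{G}|\leq 34(\gg(\bar{\cX})+1)^{3/2}$, so the cover $\cX\to\bar{\cX}$ must be heavily ramified: combining these with the Hurwitz genus formula $2\gg(\cX)-2=|N|(2\gg(\bar{\cX})-2)+\sum_{P}d_P$ forces $\gg(\bar{\cX})$ to be small relative to $\gg(\cX)$ and $|N|$ to be large. Here I would use that $N$ is a $p'$-group, so every $d_P$ equals $|N_P|-1$ (no higher ramification), and that $\sum_{\bar{Q}}(|N|-\ell_{\bar Q})$ controls both the genus increase and, by Deuring--Shafarevich, the $p$-rank; since $\cX$ is ordinary, $\gamma(\bar{\cX})=\gg(\bar{\cX})$ too, and one extracts a relation of the shape $\gg(\cX)+1\geq c\,|N|(\gg(\bar{\cX})+1)$ for an explicit constant $c$ close to $1$ coming from the number and sizes of the short orbits. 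Feeding $|\bar{G}|\leq 34(\gg(\bar{\cX})+1)^{3/2}$ into $|G|=|N||\bar{G}|$ and using $|N|(\gg(\bar{\cX})+1)\leq \gg(\cX)+1$ gives $|G|\leq 34\,|N|^{1/2}(\gg(\cX)+1)(\gg(\bar{\cX})+1)^{1/2}/(\gg(\bar{\cX})+1)^{?}$; the arithmetic has to be arranged so that the $|N|$ and $\gg(\bar{\cX})$ factors recombine into $(\gg(\cX)+1)^{3/2}$ with the constant $34$ surviving. The clean way is to show that the ramification is so severe that in fact $\gg(\bar{\cX})=0$ or $1$ after all (contradiction), i.e. that $\gg(\bar{\cX})\geq 2$ together with $|G|>34(\gg+1)^{3/2}$ is impossible; this reduces Case II entirely to the rational/elliptic subcases already handled.

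The main obstacle I anticipate is controlling the interaction between the tame normal subgroup $N$ and a possible normal $p$-subgroup of $\bar{G}=G/N$: even though $G$ has no minimal normal $p$-subgroup, the quotient $\bar{G}$ may well acquire one, and then one is back in the Case I situation one layer up but with $\cX$ replaced by $\bar{\cX}$ — so the induction must be set up carefully to ensure the counterexample genus genuinely drops. I would handle this by choosing $N$ to be a minimal normal subgroup contained in the (solvable) Fitting-type series so that $G/N$ still falls under the inductive hypothesis, and by invoking Lemma \ref{terrible} and Lemma \ref{GK61} on $\bar{\cX}$ whenever $\bar{G}$ does pick up a normal $p$-subgroup, exactly as in the proof of Proposition \ref{caseI}. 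A secondary technical point is verifying that $\bar{\cX}$ remains ordinary under a tame quotient; this is immediate from the Deuring--Shafarevich formula since $\gamma(\cX)=\gg(\cX)$ and the formula is an equality, forcing $\gamma(\bar{\cX})=\gg(\bar{\cX})$.
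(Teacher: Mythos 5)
Your plan has genuine gaps at the two places where the real work happens. First, your reduction to $\gg(\bar{\cX})\le 1$: you assert that $\bar{\cX}=\cX/N$ is again ordinary because ``the Deuring--Shafarevich formula applies to the tame cover.'' Deuring--Shafarevich is a statement about Galois $p$-covers and says nothing about a quotient by a prime-to-$p$ group, so this justification fails (the conclusion might be rescued by an argument on Jacobians, but that is not the cited tool and not what the paper does). The paper rules out $\gg(\bar{\cX})\ge 2$ quite differently: it builds an alternating chain of quotients $\cX_1=\cX/N_1$, $\cX_2=\cX_1/N_2,\dots$ by maximal prime-to-$p$ normal subgroups and maximal normal $p$-subgroups in turn, uses triviality of second ramification groups to get $\gg(\cX_i)-\gamma(\cX_i)=|N_{i+1}|(\gg(\cX_{i+1})-\gamma(\cX_{i+1}))$ at the $p$-steps, and concludes that some quotient of strictly smaller genus must be ordinary with too many automorphisms, contradicting minimality. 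Your paragraph on the ``substantive case $\gg(\bar{\cX})\ge 2$,'' with its unspecified constants and exponents, is a restatement of the goal rather than an argument.

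Second, and more seriously, in the rational case you claim that since $\bar G=G/N$ has no normal $p$-subgroup it ``must actually be a $p'$-group.'' This is false, and you contradict it yourself two paragraphs later: $G$ having no normal $p$-subgroup does not prevent $G/N$ from acquiring one, and the generic (and hardest) situation is precisely $\bar G=\bar Q\rtimes\bar C$ with $\bar Q$ a nontrivial normal $p$-subgroup of the quotient. Essentially the whole body of the paper's proof lives there: one shows $G=N\rtimes(Q\rtimes U)$ with $Q$ a Sylow $p$-subgroup fixing a point of the long $N$-orbit $\Delta$ over the unique fixed place of $\bar G$, proves that the short $N$-orbits are exactly the $|Q|$ orbits lying over the second short $\bar G$-orbit, derives $|N||Q|\le 8(\gg(\cX)-1)$ and hence $|N|<\tfrac{64}{34}\sqrt{\gg(\cX)-1}$, deduces $|Q||U|>18(\gg(\cX)-1)$ so that Lemma \ref{terrible} applies to $Q\rtimes U$ inside $\aut(\cX)$ (not on the quotient curve), and finally eliminates the surviving possibilities $|N|=2$ and $|N|=4$ using Lemma \ref{GK61} together with normality considerations. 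None of this appears in your proposal, so as written it does not close the case that actually carries the proof.
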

\begin{proof}
From Proposition \ref{caseI}, $G$ contains no nontrivial normal $p$-subgroup.
The factor group $\bar{G}=G/N$ is a subgroup of $\aut(\bar{\cX})$ where $\bar{\cX}=\cX/N$. Furthermore, $|N|\leq 4\gg(\cX)+4$ as $N$ is abelian; see \cite[Theorem 11.79]{HKT}.
 Arguing as in Section \ref{CasoI}, we see that $\bar{\cX}$ is either rational or elliptic, that is, $\gg(\cX)\ge 2$ is impossible.

 A detailed description of the arguments is given below. Assume that $\cX$ is a minimal counterexample with respect to the genus. Since $\cX$ is ordinary, any $p$-subgroup $S$ of $G$ is an elementary abelian group and it has a trivial second ramification group at any point $\cX$. The latter property remains true when $\cX$ is replaced by $\bar{\cX}$. To show this claim, take $\bar{P}\in \bar{\cX}$ and let $\bar{S}_{\bar{P}}$ the subgroup of the factor group $\bar{S}=SN/N$ fixing $\bar{P}$. Since $p\nmid |N|$ there is a point $P\in \cX$ lying over
 $\bar{P}$ which is fixed by $S$. Hence the stabilizer $S_P$ of $P$ in $S$ is a nontrivial normal subgroup of $G_P$. Since $N$ is a normal subgroup in $G$, so is $N_P$ in $G_P$. This yields that
 the product $N_PS_P$ is actually a direct product. Therefore $N_P$ is trivial, otherwise the second ramification group of $S$ in $P$ is nontrivial by a result of Serre; see \cite[Theorem 11.75]{HKT}.
Actually, $N$ may be assumed to be the largest normal subgroup $N_1$ of $G$ whose order is prime to $p$. If the quotient curve $\cX_1=\cX/N_1$ is neither rational, nor elliptic then its $\mathbb{K}$-automorphism group $G_1=G/N_1$ has order bigger than $34(\gg(\cX_1)+1)^{3/2}$, by the Hurwitz genus formula applied to $N_1$. Since $G$ and hence $G_1$ is solvable, $G_1$ has a minimal normal $d$-subgroup where $d$ must be equal to $p$ by the choice of $N_1$ to be the largest normal, prime to $p$ subgroup of $G$. Take the largest normal $p$-subgroup $N_2$ of $G_1$. Observe that
$N_2\lneqq G_1$, otherwise $G_1$ is an (elementary) abelian group of order bigger than $34(\gg(\cX_1)+1)^{3/2}$ contradicting the bound $4(\gg(\cX_1)+4)$; see \cite[Theorem 11.79]{HKT}.
Now, define $\cX_2$ to be the quotient curve $\cX_1/N_2$. Since the second ramification group of $N_1$ at any point of $\cX_1$ is trivial, the Hurwitz genus formula together with the Deuring-Shafarevich formula give $\gg(\cX_1)-\gamma(\cX_1)=|N_2|(\gg(\cX_2)-\gamma(\cX_2))$. In particular, $\cX_2$ is neither ordinary or rational by the choice of $\cX$ to be a minimal counterexample. From the proof of Proposition \ref{caseI}, the case $\gg(\cX_2)=1$ cannot occur as $|G_1|>34(\gg(\cX_1)+1)^{3/2}$. Therefore, $\gg(\cX_2)\geq 2$ and $\gg(\cX_2)>\gamma(\cX_2)$ hold. The factor group $G_2=G_1/N_2$ is a $\mathbb{K}$-automorphism group of the quotient curve $\cX_2=\cX_1/N_2$, and it has a minimal normal $d$-subgroup with $d\neq p$, by the choice of $N_2$. Define $N_3$ to be the largest normal, prime to $p$, subgroup of $G_2$. Observe that $N_3$ must be a proper subgroup of $G_2$, otherwise $G_2$ itself would be a prime to $p$ subgroup of $\aut(\cX_2)$ of order bigger than $34(\gg(\cX_2)+1)^{3/2}$ contradicting the classical Hurwitz bound $84(\gg(\cX_2)-1)$. Therefore, there exists a (maximal) nontrivial normal $p$-subgroup $N_4$ in the factor group $G_3=G_2/N_3$.
Now, the above argument remains valid whenever $G,N_1,G_1,N_2,\cX_1,\cX_2$ are replaced by $G_2,N_3,G_3,N_4,\cX_3,\cX_4$ where the quotient curves are $\cX_3=G_2/N_3$ and
$\cX_4=G_3/N_4$. In particular, $\gg(\cX_4)\neq 1$ and $\gg(\cX_3)-\gamma(\cX_3)=|N_4|(\gg(\cX_4)-\gamma(\cX_4))$.  Repeating the above argument a finite sharply decreasing sequence $\gg(\cX_1)>\gg(\cX_2)>\gg(\cX_3)>\gg(\cX_4),\ldots$ arises. If this sequence has $n+1$ members then $\gg(\cX_n)-\gamma(\cX_n)=|N_{n+1}|(\gg(\cX_{n+1})-\gamma(\cX_{n+1}))$ with $\gg(\cX_{n+1})=\gamma(\cX_{n+1})=0$. Therefore, for some (odd) index $m\leq n$, the curve $\cX_m$ for some $m\geq 1$ must be ordinary, a contradiction to choice of $\cX$ to be a minimal counterexample.

 First we investigate the elliptic case. Since $\gg(\cX)\geq 2$, the Hurwitz genus formula applied to $\bar{X}$ ensures that $N$ has a short orbit. Let $\Gamma$ be a short orbit of $G$ containing a short orbit of $N$. Since $N$ is a normal subgroup of $G$, $\Gamma$ is partitioned into short-orbits $\Sigma_1,\ldots,\Sigma_k$ of $N$ each of length $|\Sigma_1|$. Take a point $R_i$ from $\Sigma_i$ for $i=1,2,\ldots, k$, and set $\Sigma=\Sigma_1$ and $S=S_1$. With this notation, $|G|=|G_S||\Gamma|=|G_S| k|\Sigma|$, and the Hurwitz genus formula gives
 \begin{equation}
 \label{eq811oct2016}
 2\gg(\cX)-2 \geq  \sum_{i=1}^k |\Sigma_i|(|N_{S_i}|-1) = k |\Sigma| (|N_{S}|-1|)\geq + \ha k |\Sigma| |N_{S}|=\ha |G| \frac{|N_S|}{|G_S|}.
 \end{equation}
 Also,  the factor group $G_{S}N/N$  is a subgroup of $\aut(\bar{\cX})$ fixing the the point of $\bar{\cX}$ lying under $S$ in the cover $\cX|\bar{\cX}$. From \cite[Theorem 11.94 (ii)]{HKT},
 $$\frac{|G_{S}N|}{|N|} = \frac{|G_{S}|}{|G_{S} \cap N|} = \frac{|G_{S}|}{|N_{S}|}\leq 12.$$
 This and (\ref{eq811oct2016}) yield $|G|\leq 48(\gg(\cX)-1)$, a contradiction with our hypothesis  $34(\gg(\cX)+1)^{3/2}$.

 Therefore, $\bar{\cX}$ is rational. Thus $\bar{G}$ is isomorphic to a subgroup of $PGL(2,\mathbb{K})$. Since $p$ divides $|G|$ but $|N|$, $\bar{G}$ contains a nontrivial $p$-subgroup. From the classification of the finite subgroups of $PGL(2,\mathbb{K})$, see \cite{maddenevalentini1982}, either $p=3$ and $\bar{G}\cong \rm{Alt}_4, \rm{Sym}_4$, or $\bar{G}=\bar{Q}\rtimes \bar{C}$ where $\bar{Q}$ is a normal $p$-subgroup and its complement $\bar{C}$ is a cyclic prime to $p$ subgroup and $|\bar{C}|$ divides $|\bar{Q}|-1$.

 If $\bar{G}\cong \rm{Alt}_4, \rm{Sym}_4$ then $|\bar{G}|\leq 24$ whence $|G|\leq 24|N|\leq 96(\gg(\cX)+1)$ as $N$ is abelian. Comparison with our hypothesis $|G|\geq 34 (\gg(\cX)+1)^{3/2}$ shows that $\gg(\cX)\leq 6$. For small genera we need a little more. If $|N|$ is prime then $|N|\leq 2\gg(\cX)+1$; see \cite[Theorem 11.108]{HKT}, and hence $|G|\leq 48(\gg(\cX)+1)$ which is inconsistent with $|G|\geq 34 (\gg(\cX)+1)^{3/2}$. Otherwise, since $p=3$ and $|N|$ has order a power of prime distinct from $p$, the bound $|N|\leq 4(\gg(\cX)+1)$ with $\gg(\cX)\leq 6$ is only possible for $(\gg(\cX),|N|)\in\{(3,16),(4,16), (5,16),(6,16),(6,25)\}$. Comparison of $|G|\leq 24|N|$ with $|G|\geq 34 (\gg(\cX)+1)^{3/2}$ rule out the latter three cases. Furthermore, since $N$ is an elementary abelian group of order $16$, $\gg(\cX)$ must be odd by Lemma \ref{GK61}. Finally, $\gg(\cX)=3, |N|=16,\, G/N\cong \rm{Sym}_4$ is impossible as Henn's bound $|G|\geq 8\gg(\cX)^3$ implies that $\cX$ has zero $p$-rank, see \cite[Theorem 11.127]{HKT}.

 Therefore, the case $\bar{G}=\bar{Q}\rtimes \bar{C}$ occurs. Also, $\bar{G}$ fixes a unique place $\bar{P} \in \bar{\cX}$.
 Let $\Delta$ be the $N$-orbits in $\cX$ lies over $\bar{P}$ in the cover $\cX|\bar{\cX}$. We prove that $\Delta$ is a long orbit of $N$. By absurd, the permutation representation of $G$ on $\Delta$
 has a nontrivial $1$-point stabilizer containing a nontrivial subgroup $M$ of $N$. Since $N$ is abelian, $M$ is in the kernel. In particular, $M$ is a normal subgroup of $G$ contradicting our choice of $N$ to be minimal.

 Take a Sylow $p$-subgroup $Q$ of $G$ of order $|Q|=p^h$ with $h\geq 1$, and look at the action of $Q$ on $\Delta$. Since $|\Delta|=|N|$ is prime to $p$, $Q$ fixes a point $P\in \Delta$, that is,
 $Q=Q_P$. Since $\cX$ is an ordinary curve, $Q_P$ and hence $Q$ is elementary abelian; see \cite[Theorem 2 (i)]{N} or \cite[Theorem 11.74 (iii)]{HKT}. Therefore, $G_P=Q\rtimes U$ where $U$ is a prime to $p$ cyclic group. Thus
\begin{equation} \label{ord1}
|\bar{Q}||\bar{C}||N|=|\bar{G}||N|=|G|=|G_P||\Delta|=|Q||U||\Delta|=|Q||U||N|,
\end{equation}
whence $|Q|=|\bar{Q}|$ and $|U|=|\bar{C}|$. Consider the subgroup $H$ of $G$ generated by $G_P$ and $N$. Since $\Delta$ is a long $N$-orbit, $G_P \cap N =\{1\}$. As $N$ is normal in $H$ this implies that $H=N \rtimes G_P=N \rtimes (Q \rtimes U)$ and hence $|H|=|N||Q||U|$ which proves $G=H=N \rtimes (Q \rtimes U)$.

Since $\bar{\cX}$ is rational and $\bar{P}$ is the unique fixed point of nontrivial elements of $\bar{Q}$, each $\bar{Q}$-orbit other than $\{\bar{P}\}$ is long. Furthermore, $\bar{C}$ fixes a point
$\bar{R}$ other than $\bar{P}$ and no nontrivial element of $\bar{C}$ is fixes point distinct from $\bar{P}$ and $\bar{R}$. This shows that the $\bar{G}$-orbit $\bar{\Omega}_1$ of $\bar{R}$ has length $|Q|$. In terms of the action of $G$ on $\cX$, there exist as many as $|Q|$ orbits of $N$, say $\Delta_1 ,..., \Delta_{|Q|}$, whose union $\Lambda$ is a short $G$-orbit lying over $\bar{\Omega}_1$ in the cover $\cX|\bar{\cX}$. Obviously, if at least one of $\Delta_i$ is a short $N$-orbit then so all are.

We show that this actually occur. Since the cover $\cX|\bar{\cX}$ ramifies, $N$ has some short orbits, and by absurd there exists a short $N$-orbit $\Sigma$ is not contained in $\Lambda$. Then $\Sigma$ and $\Lambda$ are disjoint. Let $\Gamma$ denote the (short) $G$-orbit containing $\Sigma$. Since $N$ is a normal subgroup of $G$, $\Gamma$ is partitioned into $N$-orbits, say $\Sigma=\Sigma_1,\ldots, \Sigma_k$, each of them of the same length $|\Sigma|$.
Here $k=|Q||U|$ since the set of points of $\bar{\cX}$ lying under these $k$ short $N$-orbits is a long $\bar{G}$-orbit. Also, $|N|=|\Sigma_i||N_{R_i}|$ for $\le i \le k$ and $R_i\in \Sigma_i$. In particular, $|\Sigma_1|=\Sigma_i$ and $|N_{R_1}|=|B_{R_i}$.
From the Hurwitz genus formula,
$$2\gg(\cX)-2 \geq -2|N| + \sum_{i=1}^k |\Sigma_i|(|N_{R_i}|-1) =-2|N| + |Q||U||\Sigma_1|(|N_{R_1}|-1).$$
Since $N_{R_1}$ is nontrivial, $|N_{R_1}| -1 \geq \ha|N_{R_1}|$. Therefore,
$$2\gg(\cX)-2 \geq -2|N| + \ha|Q||U||\Sigma_1||N_{R_1}|=-2|N|+\ha |Q||U||N|=|N|(\ha(|Q||U|-2)=\ha|N|(|Q||U|-4).$$
As $|Q||U|-4 \geq \ha |Q||U|$ by $|Q||U|\geq 4$, this gives
$$2\gg(\cX)-2 \geq \qa |N||U||Q|=\qa |G|.$$
But this contradicts our hypothesis $|G|>34(\gg(\cX)+1)^{3/2}$.

Therefore, the short orbits of $N$ are exactly $\Delta_1 ,..., \Delta_{|Q|}$. Take a point $S_i$ from $\Delta_i$ for $i=1,\ldots,|Q|$. Then $N_{S_1}$ and $N_{S_i}$ are conjugate in $G$, and hence
$|N_{S_1}|=|N_{S_i}|$. From the Hurwitz genus formula applied to $N$,
$$2\gg(\cX)-2=-2|N|+\sum_{i=1}^{|Q|} |\Delta_i|(|N_{S_i}|-1) )=-2|N|+|Q||\Delta_1|(|N_{S_1}|-1)\geq -2|N|+\ha|Q||\Delta_1||N_{S_1}|.$$
Since $|N|= |\Delta_1||N_{S_1}|$, this gives $ 2\gg(\cX)-2 \geq \ha|N|(|Q|-4)|$ whence $2\gg(\cX)-2\geq \qa |N||Q|$
 provided that $|Q|\geq 5$. The missing case, $|Q|=3$, cannot actually occur since in this case $|\bar{C}|=|U|\leq |Q|-1= 2$, whence $|G|=|Q||U||N|\leq 6|N|\leq 24(\gg(\cX)+1)$, a contradiction with $|G|>34(\gg(\cX)+1)^{3/2}$. Thus
\begin{equation}
\label{eq611oct2016}
|N||Q|\le 8(\gg(\cX)-1).
\end{equation}
Since $|N||U|<|N||Q|$, this also shows
\begin{equation}
\label{eq711oct2016}
|N||U|<8(\gg(\cX)-1).
\end{equation}
Therefore, $$|G||N|=|N|^2|U||Q|<64(\gg(\cX)-1)^2.$$
Equations (\ref{eq611oct2016}) and (\ref{eq711oct2016}) together with our hypothesis $|G|\geq 34(\gg(\cX)+1)^{3/2}$ yield
\begin{equation}
\label{eq911oct2016} |N|<\frac{64}{34} \sqrt{\gg(\cX)-1}.
\end{equation}
From (\ref{eq911oct2016}) and $|G|=|N||Q||U|\geq 34(\gg(\cX)+1)^{3/2}$ we obtain
$$|Q||U|>\frac{34^2}{64}(\gg(\cX)-1)> 18(\gg(\cX)-1)$$
which shows that Lemma \ref{terrible} applies to the subgroup $Q\rtimes U$ of $\aut(\cX)$.
With the notation in Lemma \ref{terrible}, this gives that $Q \rtimes U$ and $Q$ have the same two short orbits, $\Omega_1=\{P\}$ and $\Omega_2$. In the cover $\cX/\bar{\cX}$,
the point $\bar{P}\in\bar{\cX}$ lying under $P$ is fixed by $Q$. We prove that $\Omega_2$ is a subset of
the $N$-orbit $\Delta$ containing $P$. For this purpose, it suffices to show that for any point $R\in\Omega_2$, the point $\bar{R}\in \bar{\cX}$ lying under $R$ in the cover $\cX|\bar{\cX}$ coincides  with $\bar{P}$. Since $\Omega_2$ is a $Q$-short orbit, the stabilizer $Q_R$ is nontrivial, and hence $\bar{Q}$ fixes $\bar{R}$. Since $\bar{\cX}$ is rational, this yields $\bar{P}=\bar{R}$.
Therefore, $\Omega_2\cup \{P\}$ is contained in $\Delta$, and either $\Delta=\Omega_2\cup \{P\}$ or $\Delta$ contains a long $Q$-orbit. In the latter case, $|U|<|Q|<|N|$, and hence
$$|G|^2=|N||Q||N|U||Q||U|<|N||Q||N||U||N|^2\le \frac{64^2}{34} (\gg(\cX)-1)^3$$ whence $|G|< 34(\gg(\cX)+1)^{3/2}$, a contradiction with our hypothesis.
Otherwise $|N|=|\Delta|=1+|\Omega_2|$. In particular, $|N|$ is even, and hence it is a power of $2$. Also, from the Deuring-Shafarevi\v{c} Formula,
$\gg(\cX)-1=\gamma(\cX)-1 = -|Q|+1+|\Omega_2|$ where $|\Omega_2|\geq 1$ is a power of $p$. This implies that $\gg(\cX)$ is also even.
Since $N$ is an elementary abelian $2$-group, Lemma \ref{GK61} yields that either $|N|=2$ or $|N|=4$.

If $|N|=2$ then $\Omega_2$ consists of a unique point $R$ and $Q\rtimes U$ fixes both points $P$ and $R$. Since $\Delta=\{P,R\}$, and $\Delta$ is a $G$-orbit, the stabilizer $G_{P,R}$ is an index $2$ (normal) subgroup of $G$. On the other hand, $G_{P,R}=Q\rtimes U$ and hence $Q$ is the unique Sylow $p$-subgroup of $Q\rtimes U$. Thus $Q$ is a characteristic subgroup of the normal subgroup  $G_{P,R}$ of $G$. But then $Q$ is a normal subgroup of $G$, a contradiction with our hypothesis.

 If $|N|=4$ then $|\Delta|=4$ and $p=3$. The permutation representation of $G$ of degree $4$ on $\Delta$ contains a $4$-cycle induced by $N$ but also a $3$-cycle induced by $Q$. Hence if $K=\ker$ then $G/K\cong {\rm{Sym}}_4$. On the other hand, since both $N$ and $\Ker$ are normal subgroups of $G$, their product $NK$ is normal, as well.  Hence $NK/K$ is a normal subgroup of $G/K$, but this contradicts $G/K\cong {\rm{Sym}}_4$.
\end{proof}

\vspace{0.5cm}\noindent {\em Authors' addresses}:

\vspace{0.2cm}\noindent G\'abor KORCHM\'AROS and Maria MONTANUCCI\\ Dipartimento di
Matematica, Informatica ed Economia\\ Universit\`a degli Studi  della Basilicata\\ Contrada Macchia
Romana\\ 85100 Potenza (Italy).\\E--mail: {\tt
gabor.korchmaros@unibas.it and maria.montanucci@unibas.it}


\begin{thebibliography}{30}
 \bibitem{GKLINZ} M.~Giulietti and G.~Korchm\'aros, Garden of curves with many automorphisms, in \emph{.Algebraic curves and finite fields}, 93-120, Radon Ser. Comput. Appl. Math., 16, De Gruyter, Berlin, 2014.
\bibitem{GSY}G.B.~Gunby, A.~Smith and A.~Yuan, Irreducible canonical representations in positive characteristic, \emph{Research in Number Theory} (2015) \textbf{1}:3, 1-25.
\bibitem{HKT} J.W.P. Hirschfeld, G. Korchm\'aros, and F. Torres, {\it Algebraic Curves over a Finite Field}, Princeton Series in Applied Mathematics, Princeton (2008).
\bibitem{henn1978} H.W. Henn,  Funktionenk\"orper mit gro$\beta$er Automorphismengruppe, {\em J. Reine Angew. Math.} \textbf{302} (1978), 96--115.
\bibitem{KR} A.~Kontogeorgis and V.~Rotger, On abelian automorphism groups of Mumford curves, \emph{Bull. London Math. Soc.} \textbf{40} (2008), 353-362.
\bibitem{LN} R.~Lidl and H.~Niederreiter,\emph{
Finite Fields}, Encyclopedia of Mathematics and its Applications
{\bf 20}, Cambridge Univ. Press, Cambridge, 1997, xiv+755 pp.
\bibitem{MA} A.~Mach\`i, \emph{Groups, An introduction to ideas and methods of the theory of groups}, Unitext, {\bf{58}}, Springer, Milan, 2012. xiv+371 pp.
\bibitem{N} S.~Nakajima, $p$-ranks and automorphism groups of algebraic curves, \textit{Trans. Amer. Math. Soc.} \textbf{303} (1987), 595-607.
{\bibitem{Serre} J.P. Serre, {\it Corps locaux}, Hermann, Paris, (1968).}
\bibitem{serre1979} J.-P.~Serre, \emph{Local Fields}, Graduate Texts in
Mathematics {\bf 67}, Springer, New York, (1979). viii+241 pp.
\bibitem{stichtenoth1973II} H.~Stichtenoth, \"Uber die Automorphismengruppe eines algebraischen
Funktionenk{\"o}rpers von Primzahl- charakteristik. II. Ein
spezieller Typ von Funktionenk{\"o}rpern, \emph{Arch. Math.} \textbf{24} (1973), 615--631.
\bibitem{sullivan1975} F.~Sullivan,  $p$-torsion in the class
group of curves with many automorphisms, \emph{Arch. Math.} {\bf
26} (1975), 253--261.
\bibitem{maddenevalentini1982} R.C.~Valentini and M.L.~Madan,  A
Hauptsatz of L.E. Dickson and Artin--Schreier extensions, \emph{J.
Reine Angew. Math.} {\bf 318} (1980), 156--177.

\end{thebibliography}
    \end{document}